\numberwithin{equation}{section}
\theoremstyle{plain}
\newtheorem{theorem}{Theorem}[section]
\newtheorem{proposition}[theorem]{Proposition}         
\newtheorem{corollary}[theorem]{Corollary} 
\newtheorem{definition}[theorem]{Definition}  
\newtheorem{notation}[theorem]{Notation}   
\theoremstyle{definition}  
\newtheorem{example}[theorem]{Example} 
\newcommand{\no}{\noindent} 
\newcommand{\bp}{\begin{pmatrix}} 
\newcommand{\ep}{\end{pmatrix}}
\newcommand{\C}{\mathbb C}   
\newcommand{\R}{\mathbb R}
\newcommand{\Z}{\mathbb Z}
\newcommand{\FF}{\mathbb F}
\renewcommand{\P}{\mathbb P}
\newcommand{\al}{\alpha}
\newcommand{\be}{\beta} 
\newcommand{\ga}{\gamma}
\newcommand{\de}{\delta}
\newcommand{\la}{\lambda}
\newcommand{\si}{\sigma} 
\newcommand{\La}{\Lambda}
\newcommand{\Om}{\Omega}
\renewcommand{\th}{\theta}
\newcommand{\om}{\omega}
\newcommand{\Ga}{\Gamma}
\newcommand{\na}{\nabla}
\newcommand{\calM}{\mathcal{M}}
\newcommand{\calN}{\mathcal{N}}
\newcommand{\calH}{\mathcal{H}}
\newcommand{\calF}{\mathcal{F}}
\newcommand{\calO}{\mathcal{O}}
\newcommand{\calT}{\mathcal{T}}
\newcommand{\calR}{\mathcal{R}}
\newcommand{\rank}{\mathrm{rank}}
\newcommand{\Ker}{\mathrm{Ker}}
\newcommand{\Hom}{\mathrm{Hom}}
\newcommand{\Map}{\mathrm{Map}}
\newcommand{\PD}{\mathrm{PD}}
\newcommand{\Span}{\mathrm{Span}}
\newcommand{\cyclic}{\mathrm{cyclic}}
\newcommand{\orbi}{\mathrm{orbi}}
\newcommand{\sub}{\subseteq}
\newcommand{\pr}{\prime}
\newcommand{\st}{\ \vert\ }
\renewcommand{\ll}{\lq\lq}
\newcommand{\rr}{\rq\rq\ }
\newcommand{\rrr}{\rq\rq}
\newcommand{\lan}{\langle}
\newcommand{\ran}{\rangle}
\renewcommand{\b}{\partial}
\newcommand{\bx}{\partial_{x}}
\newcommand{\by}{\partial_{y}}
\newcommand{\bt}{\partial_{t}}
\newcommand{\db}{\bar{\partial}}
\newcommand{\ABC}{\lan A \vert B \vert C \ran}
\newcommand{\subD}{_{\scriptsize D}}
\newcommand{\HolD}{\operatorname {Hol}\subD}
\newcommand{\mon}{ {\operatorname {m}} }
\newcommand{\GL}{\textrm{GL}}
\renewcommand{\O}{\textrm{O}}
\newcommand{\SU}{\textrm{SU}}
\newcommand{\U}{\textrm{U}}
\renewcommand{\S}{\textrm{S}}
\newcommand{\Sp}{\textrm{Sp}}
\newcommand{\glnc}{\GL_n\C}
\newcommand{\glsc}{\GL_{s+1}\C}
\newcommand{\glsr}{\GL_{s+1}\R}
\newcommand{\GC}{G^{\C}}
\newcommand{\frakglsc}{\frak g\frak l_{s+1}\C}
\newcommand{\g}{{\frak g}}
\newcommand{\gc}{{\frak g^{\C}}}
\renewcommand{\k}{{\frak k}}
\newcommand{\m}{{\frak m}}
\newcommand{\mc}{{\frak m^{\C}}}
\newcommand{\Gr}{\textrm{Gr}}
\newcommand{\smallsum}{  {\textstyle \text{$\sum$} }}
\newcommand{\F}{\calF}
\newcommand{\h}{ {\hbar} }
\newcommand{\one}{{\mathbf 1}_{\frac12}}
\renewcommand{\i}{i}
\begin{document}

\title[Quantum cohomology]{Differential equations aspects of quantum cohomology}  

\author{Martin A. Guest}      

\date{}   

\maketitle 

The concept of quantum cohomology arose in string theory around 20 years ago. Its mathematical foundations were established around 10 years ago,  based on the theory of Gromov-Witten invariants.  There are two approaches to Gromov-Witten invariants, via symplectic geometry or via algebraic geometry.  Both approaches give the same results for the three-point Gromov-Witten invariants of familiar manifolds $M$ like Grassmannians and flag manifolds, and these invariants may be viewed as the structure constants of the quantum cohomology algebra $QH^\ast M$,  a modification of the ordinary cohomology algebra $H^\ast M$.

However, the name \ll quantum cohomology\rr may be misleading.  On the one hand, the \ll quantum\rr and \ll cohomology\rr aspects are somewhat removed from the standard ideas of quantum physics and cohomology theory.  On the other hand, there are strong relations between quantum cohomology and several other areas of mathematics:  symplectic geometry and algebraic geometry, of course, but also differential geometry, the theory of integrable systems (soliton equations), and even number theory.

In these lectures\footnote{This article is based on
lectures given at the summer school \ll Geometric and Topological Methods for Quantum Field Theory\rrr, 
Villa de Leyva, 2007.}
we shall focus on the {\em quantum differential equations} as the fundamental concept (due to Alexander Givental:
\cite{Gi95-1}, \cite{Gi95-2}, \cite{Gi96}) which encapsulates many aspects of quantum cohomology.  This is \ll more elementary\rr than the definition of Gromov-Witten invariants, in the same way that de Rham cohomology is \ll more elementary\rr than the definition of simplicial or singular cohomology.  In addition, it is essential for understanding the relation between quantum cohomology and the theory of integrable systems, which is becoming increasingly important. 
The language of D-module theory is very convenient for this purpose.  It provides a unified way to think about (a) classical integrable systems such as the KdV equation, (b) integrable systems in differential geometry, such as harmonic maps, and (c) quantum cohomology.  

The abstract theory of D-modules is well-developed, but not widely used by nonspecialists. One goal of these lectures is to give some motivation for D-module theory (sections \ref{pde},\ref{extensions}), and to advertise some of its uses (sections \ref{qde},\ref{applications}).  A second goal is to explain in simple terms how quantum cohomology is related to other parts of mathematics.  These links have deep origins and are still evolving.  It can be difficult to grasp them from articles which use haphazardly very technical language from symplectic geometry, algebraic geometry, and singularity theory, especially when some of the links are conjectural.

The lecture series  \ll From quantum cohomology to integrable systems\rr  (based on the book \cite{Gu08}) traced a path from ordinary cohomology theory to the quantum differential equations and their role in the theory of integrable systems. The introductory lectures on cohomology and quantum cohomology do not appear here (they are the subject of an earlier survey article \cite{Gu06}).
The differential equations aspects of the lectures have been expanded slightly, and the applications have been gathered together in section \ref{applications}. The lectures contained various concrete examples from \cite{Gu08}, which have been omitted here to save space.
 
I am very grateful to the organisers of the summer school for their invitation to give these lectures and for their careful planning which resulted in an effective and pleasant environment.  I thank Ramiro Carrillo-Catal\'an for preparing a Spanish language version of the original lecture notes.
I am grateful to PIMS at the University of British Columbia for its hospitality in August 2007, where some of this material was written.  I thank the referee for refreshingly frank comments and for urging me to write a much better article, which I hope to do some day.
This research was supported by a grant from the JSPS.

\section{Linear differential equations and D-modules}\label{pde}

Consider the linear ordinary differential equation
\[
(\b^{s+1} + a_s\b^s + \dots + a_1\b + a_0)y=0
\]
for $y=y(z)$, where
$
\b = \b_z=\frac{d}{d z}
$
and the coefficients $a_0,\dots,a_s$ are functions of the
complex variable $z$. 
We assume that $a_0,\dots,a_s$ belong to the ring
$\calH=\calH_z$ of holomorphic functions on an open disk  $N=N_z$  in $\C$.   We shall write $T=\b^{s+1} + a_s\b^s + \dots + a_1\b + a_0$.

Almost all textbooks on differential equations contain a proof of the following basic result:

\begin{theorem}  
For any point $z_0\in N$ and any  values $c_0,\dots,c_s\in\C$, there is a unique solution $y\in\calH$ of $Ty=0$ which satisfies the initial conditions $y(z_0)=c_0,y^\pr(z_0)=c_1,\dots, y^{(s)}(z_0)=c_s$. 
\end{theorem}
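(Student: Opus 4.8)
The plan is to reduce the scalar equation to a first-order linear system and then apply the method of successive approximations (Picard iteration), exploiting that $N$ is a disk and hence convex and simply connected. First I would introduce the vector $Y=(y,y',\dots,y^{(s)})^{\top}$, so that $Ty=0$ together with the prescribed initial data becomes the initial value problem $Y'=A(z)\,Y$, $Y(z_0)=C$ with $C=(c_0,\dots,c_s)^{\top}$, where $A(z)$ is the companion matrix
\[
A(z)=\bp 0 & 1 & & \\ & \ddots & \ddots & \\ & & 0 & 1 \\ -a_0 & -a_1 & \cdots & -a_s \ep,
\]
all of whose entries lie in $\calH$. A holomorphic solution $Y$ of this system has first component $y$ solving $Ty=0$ with the correct jet at $z_0$, and conversely the remaining components are forced to be $y',\dots,y^{(s)}$; so it suffices to solve the system.

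Next I would pass to the equivalent integral equation $Y(z)=C+\int_{z_0}^{z}A(\zeta)\,Y(\zeta)\,d\zeta$, where the integral is taken along the segment from $z_0$ to $z$ inside $N$. Because $N$ is convex and the integrand is holomorphic, Cauchy's theorem makes this integral path-independent and the resulting function of $z$ holomorphic, which is the feature that keeps us inside $\calH$ throughout. I would then define the Picard iterates $Y_0\equiv C$ and $Y_{n+1}(z)=C+\int_{z_0}^{z}A(\zeta)\,Y_n(\zeta)\,d\zeta$, each of which is again holomorphic on $N$.

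The heart of the argument, and the step I expect to require the most care, is the convergence estimate. Fixing a closed disk $\bar D\sub N$ centred at $z_0$ of radius $r$ and setting $M=\sup_{\bar D}\|A\|$, an induction along the segment yields $\|Y_{n+1}(z)-Y_n(z)\|\le \|C\|\,(M|z-z_0|)^{n+1}/(n+1)!$ for $z\in\bar D$. Since $\sum_n (Mr)^n/n!$ converges, the iterates converge uniformly on $\bar D$; the limit $Y$ is holomorphic by the Weierstrass/Morera theorem and satisfies the integral equation, hence $Y'=AY$ and $Y(z_0)=C$. As $r$ may be taken arbitrarily close to the radius of $N$, the solution is defined on all of $N$.

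For uniqueness I would observe that the difference $W$ of two solutions with the same initial data satisfies the homogeneous equation $W(z)=\int_{z_0}^{z}A(\zeta)\,W(\zeta)\,d\zeta$; the same factorial estimate forces $\|W(z)\|\le (M|z-z_0|)^n/n!\cdot \sup_{\bar D}\|W\|\to 0$, so $W\equiv 0$. An alternative route to the whole theorem is the Cauchy majorant method: the recursion coming from $Ty=0$ determines the Taylor coefficients of $y$ at $z_0$ uniquely from $c_0,\dots,c_s$ (the leading factor $(n+s+1)\cdots(n+1)$ never vanishes), and one majorizes these coefficients using the Cauchy estimates $|a_{i,k}|\le M\rho^{-k}$ to prove convergence. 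In that approach the construction of a manifestly convergent majorant series is the delicate point.
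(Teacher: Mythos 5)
The paper offers no proof of this theorem at all --- it explicitly defers to ``almost all textbooks on differential equations'' --- so there is nothing internal to compare against; your job was effectively to supply the standard textbook argument, and you have done so correctly. Your reduction to the companion system followed by Picard iteration, with the holomorphic-category points handled properly (convexity of the disk making the integral path-independent, holomorphy of each iterate, uniform convergence on closed subdisks plus the Weierstrass theorem for the limit, the factorial estimate for uniqueness, and exhaustion of $N$ by subdisks for globality), is precisely that standard proof.
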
  

\begin{corollary}  
The set of all holomorphic solutions  (on $N$)
of the  o.d.e.\  $Ty=0$
is a vector space of dimension $s+1$.
\end{corollary}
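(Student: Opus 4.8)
The plan is to produce an explicit linear isomorphism between the solution set and $\C^{s+1}$, so that the dimension count becomes immediate from the Theorem.

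First I would observe that the solution set, call it $V$, is a linear subspace of $\calH$. Since $\b$ is a linear operator on $\calH$ and $T$ is a polynomial in $\b$ with the fixed coefficients $a_0,\dots,a_s$, the operator $T\colon\calH\to\calH$ is $\C$-linear. Hence $V=\Ker T$ is a linear subspace; in particular it contains the zero function and is closed under addition and scalar multiplication. No analytic input is needed here beyond the fact that $\calH$ is closed under differentiation.

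Next, fixing any base point $z_0\in N$, I would define the evaluation-of-initial-data map $\Phi\colon V\to\C^{s+1}$ by $\Phi(y)=(y(z_0),y^\pr(z_0),\dots,y^{(s)}(z_0))$. This is well defined because every $y\in\calH$ is holomorphic, hence infinitely differentiable, and it is clearly $\C$-linear. The key point is then that the two halves of the Theorem translate directly into the two properties that make $\Phi$ an isomorphism: the existence clause says that every prescribed initial-data vector $(c_0,\dots,c_s)$ is realized by some $y\in V$, so $\Phi$ is surjective; the uniqueness clause says that a solution is determined by its initial data, so $\Phi$ is injective (equivalently, any $y\in V$ with $\Phi(y)=0$ must agree with the zero solution, forcing $\Ker\Phi=0$).

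Combining these, $\Phi$ is a linear isomorphism and therefore $\dim_\C V=\dim_\C\C^{s+1}=s+1$. I do not expect any genuine obstacle, since the Theorem supplies all the analytic content and what remains is the purely formal packaging of existence as surjectivity and uniqueness as injectivity. The only point requiring a little care is to confirm that $\Phi$ is genuinely well defined and $\C$-linear on all of $V$ before invoking the Theorem, and to note that the resulting dimension is independent of the chosen base point $z_0$.
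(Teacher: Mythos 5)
Your proof is correct and is precisely the argument the paper intends (the corollary is stated without proof as an immediate consequence of the theorem): the evaluation map $y\mapsto(y(z_0),\dots,y^{(s)}(z_0))$ is linear, surjective by the existence clause, and injective by the uniqueness clause, giving $\dim_\C V=s+1$. Nothing is missing.
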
 

Introducing new variables
\[
y_0=y,\ \ y_1=\b y=y^\pr,\ \ \dots,\ \ y_s={\b}^s y=y^{(s)}
\]
we may convert the above scalar equation of order $s+1$
to an equivalent system of $s+1$  first order equations of the form $\b Y=AY$:
\[
\b
\bp
y_0 \\ \vdots \\ y_{s-1} \\ y_s
\ep
=
\bp
0 & 1 &  & \\
 & \ddots & \ddots & \\
  & & 0 & 1 \\
  -a_0 & \cdots &  -a_{s-1} & -a_s
  \ep
\bp
y_0 \\ \vdots \\ y_{s-1} \\ y_s
\ep.
\]
\begin{corollary}  
The set of all holomorphic solutions  (on $N$)
of the  system  $\b Y = A Y$
is a vector space of dimension $s+1$.
\end{corollary}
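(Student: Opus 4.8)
The plan is to deduce this corollary from the first corollary (the dimension count for the scalar equation $Ty=0$) by exhibiting an explicit linear isomorphism between the two solution spaces. First I would observe that the solution set of $\b Y = AY$ is indeed a $\C$-vector subspace of $\calH^{s+1}$: if $Y,\tilde Y$ are holomorphic solutions and $\la,\mu\in\C$, then $\b(\la Y+\mu\tilde Y)=\la\,\b Y+\mu\,\b\tilde Y=A(\la Y+\mu\tilde Y)$. So only the dimension remains to be computed.

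The key structural point is that the companion matrix $A$ encodes nothing more than the substitution $y_i=\b^i y_0$. Reading $\b Y = AY$ row by row, the first $s$ rows say $\b y_0=y_1,\ \b y_1=y_2,\ \dots,\ \b y_{s-1}=y_s$, which force $y_i=\b^i y_0$ for $i=0,\dots,s$; the final row says $\b y_s=-a_0 y_0-\dots-a_s y_s$, and substituting $y_i=\b^i y_0$ turns this into exactly $\b^{s+1}y_0+a_s\b^s y_0+\dots+a_0 y_0=Ty_0=0$. Conversely, for any scalar solution $y$ of $Ty=0$, the same computation shows that the tuple $(y,\b y,\dots,\b^s y)$ satisfies every row of the system.

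This suggests defining $\Phi$ from the scalar solution space to the system solution space by $\Phi(y)=(y,\b y,\dots,\b^s y)$, and $\Psi$ in the reverse direction by $\Psi(Y)=y_0$, the projection onto the first component. Both maps are manifestly $\C$-linear, and the computation above shows $\Phi$ lands in the system solution space while $\Psi$ lands in the scalar solution space. Then $\Psi\circ\Phi=\mathrm{id}$ is trivial, and $\Phi\circ\Psi=\mathrm{id}$ holds because every system solution satisfies $y_i=\b^i y_0$ and is therefore determined by its first component. Hence $\Phi$ is a linear isomorphism, so the system solution space has the same dimension as the scalar solution space, namely $s+1$ by the first corollary.

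I do not anticipate any genuine obstacle: the entire content is the bookkeeping of the companion-matrix structure, and one could equally invoke a direct existence-uniqueness theorem for first-order systems, but the reduction above is cleaner given what is already available. The one point deserving care is the surjectivity of $\Phi$ (equivalently, the injectivity of $\Psi$), which uses all $s$ of the shift rows to recover the higher derivatives from $y_0$; without them, the first component alone would not pin down the full tuple $Y$.
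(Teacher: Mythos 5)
Your proof is correct and follows exactly the route the paper intends: the corollary is presented there as an immediate consequence of the substitution $y_i=\b^i y$ converting the scalar equation into the equivalent companion-matrix system, and your explicit isomorphism $\Phi(y)=(y,\b y,\dots,\b^s y)$ with inverse given by projection onto the first component is precisely the content of that equivalence. You have simply spelled out the bookkeeping that the paper leaves implicit.
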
 

Let us choose a basis $y_{(0)},\dots,y_{(s)}$ of
solutions of the scalar equation.  The corresponding vector functions 
\[
Y_{(i)} = 
\bp
y_{(i)} \\ \b y_{(i)}  \\  \vdots \\ {\b}^s y_{(i)} 
\ep,
\quad\quad
0\le i\le s
\]
constitute a basis of solutions of $\b Y=AY$.
The fundamental solution matrix
\[
H=
\bp
\vert & & \vert \\
Y_{(0)} & \cdots & Y_{(s)} \\
\vert & & \vert
\ep
\]
satisfies $\b H = A H$, 
and it takes values in the group $\glsc$ of invertible $(s+1)\times(s+1)$
complex matrices. This \ll matrix o.d.e.\rr is a third incarnation of the original equation.  Its solutions correspond to intial conditions $H(z_0)=C\in \glsc$.  

The matrix $A$ 
depends on the definition of $y_0,\dots,y_s$.  Instead of using the successive derivatives of $y$, let us set
\[
y_0=P_0y,\quad y_1=P_1 y,\quad \dots,\quad y_s=P_s y
\]
where $P_0,\dots,P_s$ are differential operators.  This leads to an equivalent system of first order equations if the equivalence classes
$[P_0],\dots, [P_s]$ form a basis (over $\calH$)  of the \ll D-module\rr
\[
\calM = D/(T),
\]
where $D$ denotes the ring of differential operators (polynomials in $\b$ with coefficients in $\calH$) and $(T)$ denotes the left ideal generated by $T$. The ring operations on $D$ are addition and composition of differential operators.  

Each such choice of basis corresponds to a way of converting the scalar equation to a matrix equation $\b Y=AY$; the matrix $A$ is given 
explicitly by\footnote{The notation $\b P_j$ here means composition of differential operators;  this conflicts with our earlier usage of $\b f$ to mean $\b f/\b z$.  We shall just rely on the context to distinguish these:  $\b f$ means the function $\b f/\b z$ when used in a differential equation, while in a D-module computation it is the same as the operator  $f\b + \b f/\b z$.}
$\b [P_j]=[\b P_j]= \sum_{k=0}^s A_{jk} [P_k]$.  
It should be noted that $\calM$ is an infinite-dimensional complex vector space,  indeed it can be identified with the space 
$\Map(N,\C^{s+1})$ of (holomorphic) maps from $N$ to $\C^{s+1}$. 

This discussion can be generalized to 
partial differential equations. We shall be concerned only with \ll overdetermined\rr linear systems of p.d.e., which share many common features with linear o.d.e, in particular finite-dimensionality of the solution space.  
Let $N=N_{z_1,\dots,z_r}$ be a fixed open polydisk in $\C^r$. 
Writing $\b_1 = {\b}/{\b z_1}, \dots,  \b_r = {\b}/{\b z_r},$
we consider a system of p.d.e.\ 
\[
T_1y=0,\quad\dots,\quad T_uy=0
\]
for a scalar function $y(z_1,\dots,z_r)$ on $N$. 
The $T_i$ are differential operators, that is, polynomials in $\b_1,\dots,\b_r$ with coefficients in the ring $\calH=\calH_{z_1,\dots,z_r}$ (functions of $z_1,\dots,z_r$ which are holomorphic in $N$).

In contrast to the o.d.e.\ case, it is not at all clear whether the solution space is finite-dimensional (or what the dimension is).  
The concept of D-module and the closely related concept of flat connection are essential at this point. We shall just give a brief discussion, referring to \cite{Ph79} for the general theory.
Let $D=D_{z_1,\dots,z_r}$ be the ring of differential operators generated by $\b_1,\dots,\b_r$ with
coefficients in the ring $\calH$ of holomorphic functions on $N$. 
Let 
\[
\calM=D/(T_1,\dots,T_u),  
\]
where $(T_1,\dots,T_u)$ means the
left ideal generated by the differential operators $T_1,\dots,T_u$.  

\no{\bf Assumption:} 
$\calM$ is a free module of rank $s+1$ over $\calH$.

\no This is a strong assumption, but the following proposition (whose proof consists merely of unravelling the definitions) allows us to conclude that   the solution space of the original system has dimension $s+1$:

\begin{proposition}\label{hom}  The map
\begin{align*}
\th: \{ y\st T_iy=0, 1\le i\le u \}  \ \   &   \to  \ \ 
\Hom_D(\calM,\calH),\\
y\ \ \  &\mapsto \ \  ( [X] \mapsto Xy )
\end{align*}
is an isomorphism of complex vector spaces.
\end{proposition}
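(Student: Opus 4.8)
The plan is to unravel the definitions and exhibit an explicit two-sided inverse to $\th$, since both the map and its candidate inverse are essentially forced by $D$-linearity. First I would verify that $\th$ is well defined. Given a solution $y$, the assignment $X\mapsto Xy$ descends to the quotient $\calM=D/(T_1,\dots,T_u)$ because every element of the defining left ideal has the form $\sum_i Q_iT_i$ with $Q_i\in D$, and $\bigl(\sum_i Q_iT_i\bigr)y=\sum_i Q_i(T_iy)=0$ precisely because $y$ solves the system. The induced map $[X]\mapsto Xy$ is a homomorphism of $D$-modules by associativity of the $D$-action: writing $\phi_y$ for it, one has $\phi_y(P[X])=(PX)y=P(Xy)=P\,\phi_y([X])$ for all $P,X\in D$. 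Thus $\th$ does take values in $\Hom_D(\calM,\calH)$, and it is visibly $\C$-linear in $y$.

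The crucial observation for the inverse is that a $D$-homomorphism out of $\calM$ is determined by the image of the generator $[1]$. Given $\phi\in\Hom_D(\calM,\calH)$, set $y:=\phi([1])$. Then $D$-linearity forces $\phi([X])=\phi(X[1])=X\,\phi([1])=Xy$ for every $X\in D$, so $\phi=\phi_y$. The one point that genuinely requires checking is that this $y$ lies in the solution space: since each $T_i$ belongs to the ideal $(T_1,\dots,T_u)$, its class $[T_i]$ vanishes in $\calM$, and therefore $T_iy=\phi([T_i])=\phi(0)=0$. Hence $\phi\mapsto\phi([1])$ defines a map $\Hom_D(\calM,\calH)\to\{\,y\st T_iy=0,\ 1\le i\le u\,\}$.

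It then remains only to confirm that the two composites are identities. Starting from a solution $y$ and applying $\th$ followed by evaluation at $[1]$ returns $\phi_y([1])=1\cdot y=y$; starting from $\phi$, the computation above already shows $\th(\phi([1]))=\phi$. This proves that $\th$ is a $\C$-linear bijection, hence the claimed isomorphism of complex vector spaces. I do not expect any real obstacle here, in line with the remark that the result follows by unravelling the definitions; the only subtlety worth isolating is the verification that $\phi([1])$ actually solves the system, which hinges on the vanishing of the classes $[T_i]$ in $\calM$.
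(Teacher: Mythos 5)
Your proof is correct and is exactly the ``unravelling of the definitions'' that the paper alludes to (the paper offers no further detail than that remark): well-definedness on the quotient via the left-ideal structure, $D$-linearity of $[X]\mapsto Xy$, and the inverse $\phi\mapsto\phi([1])$, with the key check that $T_i\,\phi([1])=\phi([T_i])=0$. Nothing is missing.
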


As in the o.d.e.\ case, the D-module point of view allows us to make clear the relation between scalar and matrix equations, under the above assumption.  However, there is an important new ingredient, a certain flat connection, which leads to the relation with integrable systems.  We shall therefore review the whole procedure carefully, taking the opportunity to introduce some further notation.

\no{\em How to convert a scalar system to a matrix system.} 

Let $[P_0],\dots,[P_s]$ be a basis of $\calM$ over $\calH$.
We define 
$(s+1)\times(s+1)$ matrix functions  $\Om_1,\dots,\Om_r$ by
\[
\b_i [P_j]=[\b_i P_j] = \sum_{k=0}^s (\Om_{i})_{kj} [P_k].
\]
We set $\Om=\sum_{i=1}^r \Om_i dz_i$.  
Then $\na=d+\Om$ defines a connection in the trivial vector bundle
$N\times \C^{s+1}$, whose space of sections is
$\Map(N,\C^{s+1}$).
Namely,
\begin{align*}
\na_{\b_i}(\smallsum_{j=0}^s f_j [P_j]) &=
\sum_{j=0}^s \b f_j/\b z_i [P_j] + \smallsum_{j=0}^s f_j \na_{\b_i}[P_j] \\
&=
\smallsum_{j=0}^s \b f_j/\b z_i [P_j] + \smallsum_{j,k=0}^s f_j (\Om_{i})_{kj} [P_k].
\end{align*}

Let us now recall a well known fact about such connections (see section 4.5 of \cite{Gu08}).

\begin{theorem}\label{flat} The following statements are equivalent:

\no(1) The connection $d+\Om$ is flat (i.e.\ has zero curvature).

\no(2) $d\Om + \Om\wedge\Om=0$.

\no(3) $[\b_i+\Om_i,\b_j+\Om_j]=0$ for all $i,j$.

\no(4) $\Om=L^{-1}dL$ for some $L:N\to \glsc$ (for this it is essential that $N$ is simply connected).
\end{theorem}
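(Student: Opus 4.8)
The plan is to establish the chain of equivalences by proving a cycle of implications, treating the connection-theoretic statements (1)--(3) together as a standard computation and reserving the genuine work for (4). The equivalence of (1), (2), and (3) is essentially a restatement of the definition of curvature. Recall that the curvature of $\na=d+\Om$ is the $\End(\C^{s+1})$-valued two-form $F=d\Om+\Om\wedge\Om$, so (1)$\iff$(2) is immediate from the definition of flatness. For (2)$\iff$(3), I would expand $F$ in the basis $dz_i\wedge dz_j$: the coefficient of $dz_i\wedge dz_j$ in $d\Om+\Om\wedge\Om$ is exactly $\b_i\Om_j-\b_j\Om_i+[\Om_i,\Om_j]$, and a direct computation shows this equals the operator $[\b_i+\Om_i,\b_j+\Om_j]$ applied in the relevant sense. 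I would present these two steps briefly, since they amount to unwinding definitions rather than producing new ideas.

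The substance lies in (4)$\Rightarrow$(1) and its converse. The forward direction is easy: if $\Om=L^{-1}dL$, then a short calculation gives $d\Om+\Om\wedge\Om=d(L^{-1})\wedge dL + L^{-1}dL\wedge L^{-1}dL$, and using $d(L^{-1})=-L^{-1}(dL)L^{-1}$ everything cancels, so (2) holds and hence (1). I would carry this out in a single display.

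The hard direction, and the main obstacle, is (1)$\Rightarrow$(4): constructing a global $L:N\to\glsc$ with $\na L=0$, i.e.\ $dL=-\Om L$ (equivalently $\b_i L = -\Om_i L$ for all $i$), or in the form stated, building the fundamental solution matrix $H$ satisfying $dH=-\Om H$ and setting $L=H^{-1}$. Here is where flatness is used as an integrability condition: the overdetermined system $\b_i H=-\Om_i H$ has, by Theorem \ref{flat}(3), compatible cross-derivatives $\b_i\b_j H=\b_j\b_i H$, which is precisely the Frobenius integrability condition guaranteeing local solvability. My plan is to first solve the system locally near a basepoint $z_0\in N$ by successively integrating the ordinary differential equations along coordinate directions (or by invoking the Frobenius theorem directly), obtaining a local solution with $H(z_0)=I$; the flatness condition ensures the result is independent of the order of integration and hence well-defined. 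The invertibility of $H$ follows because $\det H$ satisfies a first-order scalar equation (via the identity $\b_i\det H = \trace(\Om_i)\cdot(-\det H)$ up to sign) and so never vanishes once it is nonzero at $z_0$.

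The final step is to promote the local solution to a global one, and this is where the simple connectedness of $N$ enters essentially. Since $N$ is an open polydisk, it is in fact convex and hence simply connected; the monodromy of the flat connection around any loop is trivial, so the locally defined $H$ extends unambiguously to all of $N$ without multivaluedness. I would argue that analytic continuation of the local solution along any path depends only on the homotopy class of the path (again by the integrability condition governing parallel transport), and triviality of $\pi_1(N)$ then forces a single-valued global $L=H^{-1}:N\to\glsc$ with $\Om=L^{-1}dL$. I expect the extension argument to be the most delicate point to state cleanly, since without simple connectivity one obtains only a representation of $\pi_1(N)$ rather than a genuine function $L$, which is exactly why the parenthetical caveat in statement (4) is needed.
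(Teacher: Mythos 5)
Your proof is correct: the identification of the curvature coefficients with the operator commutators, the reduction of (1)$\Rightarrow$(4) to the Frobenius integrability of the system $\b_i H=-\Om_i H$, the non-vanishing of $\det H$ via the scalar first-order equation, and the use of simple connectedness to globalize are all sound, and the sign conventions (taking $L=H^{-1}$ so that $L^{-1}dL=\Om$) work out. Note that the paper itself offers no proof of Theorem \ref{flat} --- it is quoted as a well-known fact with a reference to section 4.5 of \cite{Gu08} --- and your argument is precisely the standard one such a reference supplies, so there is nothing to compare beyond saying you have filled in the omitted details correctly.
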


Using this we obtain:

\begin{proposition} 
The connection $\na=d+\Om$ obtained from $\calM$ is flat.
\end{proposition}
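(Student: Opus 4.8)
The plan is to verify condition (3) of Theorem \ref{flat}, namely $[\b_i + \Om_i, \b_j + \Om_j] = 0$ for all $i,j$, since this is the most directly accessible of the four equivalent conditions given the explicit description of $\Om$ in terms of $\calM$. The essential observation is that the covariant derivative $\nabla_{\b_i}$ is nothing but the operation of left multiplication by $\b_i$ on the module $\calM$, transported to $\Map(N,\C^{s+1})$ via the chosen basis $[P_0],\dots,[P_s]$.

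First I would record that left multiplication by $\b_i$ descends to a well-defined $\C$-linear endomorphism of $\calM = D/(T_1,\dots,T_u)$: if $X = \sum_k Q_k T_k$ lies in the left ideal, then $\b_i X = \sum_k (\b_i Q_k) T_k$ lies in it too, so the map $[X]\mapsto[\b_i X]$ is well-defined. Next I would check that this endomorphism agrees with $\nabla_{\b_i}$ under the identification $\sum_j f_j [P_j] \leftrightarrow (f_0,\dots,f_s)$. This is exactly the computation displayed before the statement: expanding $\b_i\cdot(\sum_j f_j[P_j])$ in $D$ by the Leibniz rule for composition of operators gives $\sum_j (\b f_j/\b z_i)[P_j] + \sum_j f_j[\b_i P_j]$, and substituting $[\b_i P_j] = \sum_k (\Om_i)_{kj}[P_k]$ reproduces precisely the formula defining $\nabla_{\b_i}$. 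Reading off the coefficient of each $[P_k]$ shows that, on coefficient vectors, $\nabla_{\b_i}$ acts as $\b_i + \Om_i$.

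The conclusion is then immediate. Because the generators $\b_1,\dots,\b_r$ of $D$ commute (mixed partial derivatives commute), left multiplication by $\b_i$ and by $\b_j$ commute as endomorphisms of $D$, hence also as endomorphisms of the quotient $\calM$. Transporting this commutativity through the basis identification yields $[\b_i+\Om_i,\b_j+\Om_j] = [\nabla_{\b_i},\nabla_{\b_j}] = 0$, which is condition (3). By Theorem \ref{flat} the connection $\nabla = d + \Om$ is flat.

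The only point requiring care --- and the place where the footnote about the dual meaning of $\b f$ earns its keep --- is the second step, matching $\nabla_{\b_i}$ with the module operation: one must consistently read $\b_i P_j$ as composition of operators while reading $\b f_j/\b z_i$ as an ordinary derivative of a coefficient function. Once this bookkeeping is fixed, no further analysis is needed; in particular the freeness assumption on $\calM$ enters only to guarantee that the basis $[P_j]$, and hence the identification with $\Map(N,\C^{s+1})$, exists in the first place.
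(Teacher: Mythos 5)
Your proposal is correct and takes essentially the same route as the paper: the paper's proof is exactly the observation that $\b_i\b_j=\b_j\b_i$ in $D$ makes the induced left-multiplication operators on $\calM$ commute, so that under the basis identification $(\b_i+\Om_i)(\b_j+\Om_j)f=(\b_j+\Om_j)(\b_i+\Om_i)f$, which is condition (3) of Theorem \ref{flat}. Your write-up merely makes explicit two points the paper leaves implicit (well-definedness of left multiplication by $\b_i$ on the quotient, and the matching of that action with $\nabla_{\b_i}$ via the displayed Leibniz computation), which is fine.
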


\begin{proof} 
Since $\b_i \b_j = \b_j \b_i$, we have
$\b_i(\b_j [P]) = \b_j(\b_i [P])$ for any $[P]\in\calM$, hence 
$(\b_i+\Om_i)(\b_j+\Om_j)f=(\b_j+\Om_j)(\b_i+\Om_i)f$ for any $f$, i.e.\  $[\b_i+\Om_i,\b_j+\Om_j]=0$.
\end{proof}

The map $L^t$ in (4)  can be regarded as a fundamental solution matrix for the
system $(\b_i-\Om^t_i)Y=0$, $1\le i\le r$.  If we introduce
$A_i=\Om_i^t$ we obtain a matrix system
\[
\b_iY=A_iY, \quad 1\le i\le r
\]
of the required form.  It should be noted that $(\b_i-\Om^t_i)Y=0$ is the equation for parallel (covariant constant) sections with respect to the {\em dual} connection $\nabla^\ast=d-\Om^t$,  rather than $\nabla=d+\Om$ itself.  The identification of the solution space of the system with $\Ker\nabla^\ast$ may be regarded as a matrix version of 
Proposition \ref{hom}.  

To summarize, we can say that the choice of basis $[P_0],\dots,[P_s]$ produces a matrix system from a scalar system in the following way:

\begin{proposition} The map
$
y\longmapsto
Y=(P_0y,\dots,P_sy)^t
$
from the solution space
\[
\{ y\st T_jy=0, 1\le j\le u  \}
\ \cong\  \Hom_D(\calM,\calH)
\]
to the solution space
\[
\{ Y\st \b_iY = A_i Y, 1\le i\le r \}
\ = \ \Ker \na^\ast
\]
is an isomorphism of $(s+1)$-dimensional vector spaces.
\end{proposition}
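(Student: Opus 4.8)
The plan is to realize the stated map as a composite of two isomorphisms. The first is the isomorphism $\th$ of Proposition \ref{hom}, which sends a solution $y$ to the $D$-homomorphism $\varphi_y:[X]\mapsto Xy$. Evaluating $\varphi_y$ on the chosen basis gives $(\varphi_y([P_0]),\dots,\varphi_y([P_s]))^t=(P_0y,\dots,P_sy)^t=Y$, so the map in question is precisely $\th$ followed by ``evaluation on $[P_0],\dots,[P_s]$''. It therefore suffices to prove that this evaluation map carries $\Hom_D(\calM,\calH)$ isomorphically onto $\Ker\na^\ast$; composing with $\th$ then yields the result. Moreover the source space is $(s+1)$-dimensional by the discussion following the Assumption, so once the map is shown to be an isomorphism the target is $(s+1)$-dimensional as well.

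First I would record the purely $\calH$-linear statement. Because $[P_0],\dots,[P_s]$ is a basis of the free $\calH$-module $\calM$, the evaluation map $\varphi\mapsto(\varphi([P_0]),\dots,\varphi([P_s]))^t$ is a $\C$-linear isomorphism from $\Hom_\calH(\calM,\calH)$ onto $\calH^{s+1}=\Map(N,\C^{s+1})$, its inverse extending a prescribed tuple of values $\calH$-linearly over the basis. The remaining task is to single out, among all $\calH$-linear $\varphi$, exactly those that are $D$-linear, and to match that condition with membership in $\Ker\na^\ast$.

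This is the heart of the argument. Since $\calH$ together with $\b_1,\dots,\b_r$ generates $D$, an $\calH$-linear $\varphi$ is $D$-linear precisely when it commutes with each $\b_i$. Writing $f_j=\varphi([P_j])$ and inserting the defining relation $\b_i[P_j]=\sum_k(\Om_i)_{kj}[P_k]$, the requirement $\varphi(\b_i[P_j])=\b_i\varphi([P_j])$ becomes
\[
\b_i f_j=\sum_{k=0}^s (\Om_i)_{kj}\,f_k .
\]
Setting $Y=(f_0,\dots,f_s)^t$ and recalling $A_i=\Om_i^t$, this is exactly $\b_i Y=A_iY$, i.e.\ $Y\in\Ker\na^\ast$. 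Hence evaluation restricts to an isomorphism $\Hom_D(\calM,\calH)\cong\Ker\na^\ast$, and the proposition follows. The one step I would check most carefully is the index bookkeeping in this display: the transpose built into $A_i=\Om_i^t$ is exactly what turns the $D$-linearity relation $\b_i f_j=\sum_k(\Om_i)_{kj}f_k$ into the matrix equation $\b_iY=A_iY$ rather than its transpose. Everything else is a formal unraveling of definitions, so this index matching, and not any analytic input, is the only real obstacle.
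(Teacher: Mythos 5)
Your proof is correct and is essentially the paper's own (implicit) argument: the proposition is stated there as a summary of the preceding construction, namely the composition of Proposition \ref{hom} with evaluation on the basis $[P_0],\dots,[P_s]$, under which $D$-linearity of an $\calH$-linear functional translates, via $\b_i[P_j]=\sum_k(\Om_i)_{kj}[P_k]$, into the parallel-section equation $\b_iY=A_iY$ for the dual connection $\na^\ast=d-\Om^t$. The transpose bookkeeping you highlight is exactly the point the paper flags when it remarks on the appearance of the dual connection and of the dual D-module $\Hom_\calH(\calM,\calH)$, of which $\Hom_D(\calM,\calH)$ is a subspace.
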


The appearance of the dual connection $\nabla^\ast$ and the dual D-module $\calM^\ast=\Hom_\calH(\calM,\calH)$ (of which $\Hom_D(\calM,\calH)$ is a subspace) is an important feature of the construction.  We shall make essential use of this in describing the reverse construction, next.

\no{\em How to convert a matrix system to a scalar system.} 

Given a system 
\[
\b_i Y = A_i Y, \quad 1\le i\le r
\]
of
first order matrix p.d.e. whose coefficients are holomorphic on $N$, it is possible to construct a system of higher order scalar p.d.e., providing the connection $d-A$ corresponding to the matrix system is flat.

\no{\em Step 1:} We begin with the D-module 
$\calN=\Map(N,\C^{s+1})=\calH^{s+1}$, where the action of 
$\b_i$ is given by $\b_i\cdot Y=(\b_i-A_i)Y$.  (This extends to an action of $D$ because the flatness condition --- i.e.\  (3) of Theorem \ref{flat} --- ensures that $\b_i\cdot(\b_j\cdot Y)=\b_j\cdot(\b_i\cdot Y)$.)

\no{\em Step 2:} The dual D-module $\calN^\ast$ is defined by $\calN^\ast=\Hom_\calH(\calN,\calH)$ with 
action of $\b_i$ given by 
$\b_i\cdot p(Y)=-p(\b_i\cdot Y)+\b p(Y)/\b z_i$.  

\no{\em Step 3:} Choose\footnote{To guarantee the existence of a cyclic vector, it is necessary to enlarge $\calH$ in step 1, for example to the field of meromorphic functions on $N$. A proof can be found in \cite{PuSi03} (Proposition 2.9 and Lemma D.5). If the coefficients are polynomial functions, it suffices to replace $\calH$ by the algebra of polynomial functions (Theorem 8.18 and Corollary 8.19 of \cite{Bj79}).}
a cyclic element of $\calN^\ast$, namely an element $p_{\cyclic}$ such that $D\cdot p_{\cyclic}=\calN^\ast$.  

\no{\em Step 4:} It follows that
$\calN^\ast \cong D/I$, where $I$ is the (left) ideal of operators which 
annihilate $p_{\cyclic}$.

\no{\em Step 5:} Choose\footnote{To guarantee that $D$ is Noetherian (so that a finite set of generators of $I$ always exists), it is necessary to replace $\calH$ by, for example, the ring of holomorphic functions on a closed poly-disk (section 4 of \cite{Ph79}) or the ring of polynomial functions (section 3 of the Introduction of \cite{Bj79}).
}
generators $T_1,\dots,T_u$ for the ideal $I$.
Then a suitable scalar system (not unique) is $T_1y=0, \dots, T_uy=0$.  

We illustrate the procedure with the following (artificial) example.  
In situations which arise from geometry, cyclic elements (step 3)  and generators for ideals (step 5) often arise naturally.  

\begin{example}
Consider the matrix system
\[
\b
\bp y_0\\y_1\ep =
\bp 0 & u\\ v & 0 \ep
\bp y_0\\y_1\ep,
\]
where $u,v$ are given functions of $z$, holomorphic on $N$.
As a candidate for a cyclic element of the dual D-module we try $p_0$, defined by $p_0(Y)=y_0$.  
Since
\begin{align*}
\b\cdot p_0(Y)&=- p_0(\b\cdot Y)+ p_0(Y)^\pr\\
&=- p_0
\left(
\bp y_0\\ y_1 \ep^\pr -
\bp 0 & u\\ v & 0 \ep
\bp y_0\\y_1\ep
\right)
+y_0^\pr\\
&= -(y_0^\pr - uy_1) + y_0^\pr\\
&=uy_1\\
&=u p_1(Y),
\end{align*}
we have $\b\cdot p_0 = u p_1$ (where $p_1$ is defined by $p_1(Y)=y_1$).  If $u$ is never zero on $N$,
$p_0$ and $\b\cdot p_0$ span the D-module over $\calH$, so $ p_0$ is a cyclic element.  A similar calculation gives 
$\b^2\cdot p_0 = uv p_0 +u^\pr p_1=
uv p_0 +(u^\pr/u)\b\cdot p_0$.  
We obtain
$(\b^2 -  (u^\pr/u) \b - uv)\cdot p_0 = 0$,
so  the scalar o.d.e.\  is $(\b^2 -  (u^\pr/u) \b - uv)y=0$. 
(This computation amounts to \ll declaring that $y=y_0$\rr and computing the scalar system for $y$ from the matrix system for $Y$.)   If $1/u$ does not belong to $\calH$, we must either enlarge $\calH$ or try another candidate for a cyclic element.
\qed
\end{example}

Although the D-module  $\calM=D/(T_1,\dots,T_u)$ is the fundamental object, we can regard $\calH^{s+1}$ (with D-module structure given by $d+\Om$) as a concrete representation. This representation is often useful for calculations. Moreover, $H$ can be regarded as a gauge transformation which converts $d+\Om$ to the trivial connection $d$.  To express these correspondences it is convenient to introduce the following \ll$J$-function\rr (a name introduced by Givental in the context of quantum cohomology).

\begin{notation} Let $J=(y_{(0)},\dots,y_{(s)})$, 
where $y_{(0)},\dots,y_{(s)}$ is any basis of solutions of the scalar system
$T_jy=0$, $1\le j\le u$.
\end{notation}

\no We obtain a basis $Y_{(0)},\dots,Y_{(s)}$ of solutions of the matrix system, whose fundamental solution matrix can be written
\[
H=
\bp
\vert &  & \vert \\
Y_{(0)} & \cdots & Y_{(s)}  \\
\vert &  & \vert
\ep
=
\bp
\text{---\,} & P_0 J& \text{---} \\
 & \vdots &  \\
\text{---\,} & P_s J & \text{---}
\ep.
\]
It is usually possible to take $P_0=1$,  in which case the top row of the last matrix is just $J$.

The identifications just described are shown below:
\[
\begin{array}{ccccc}
\b_i  &  &  \b_i+\Om_i  &  &  \b_i  \\
\\
\calM &   \xrightarrow{\ \ [P_0],\dots,[P_s]\ \ } &
\calH^{s+1} & \xrightarrow{\ \ H\ \ } & \calH^{s+1} \\
\\
P=\smallsum f_i P_i  &  &  
(f_0,\dots,f_s)^t
& & PJ 
\end{array}
\]
In each column, the operator (top) acts on elements (bottom) of the D-module (middle) in the natural way.

\section{The quantum differential equations}\label{qde}

We begin by summarizing briefly the notation from cohomology theory that we shall use.  

Let $M$ be a connected simply connected compact  K\"ahler manifold,
of complex dimension $n$. {\em For simplicity we assume that the nonzero  integral cohomology groups
of $M$ are even-dimensional and torsion-free.}
We generally use lower-case letters $a,b,c,\dots \,\in H^\ast(M;\Z)$
for cohomology classes,  and upper-case letters
$A=\PD(a), \ B=\PD(b),\  C=\PD(c),
\dots\,\in\  H_\ast(M;\Z)$
for their Poincar\'e dual homology classes.  We often refer to $A,B,C,\dots$ as though they are submanifolds or subvarieties of $M$ (rather than equivalence classes of cycles). We often regard $a,b,c,\dots$ as differential forms on $M$. 

Let
$\lan\ ,\ \ran:H^\ast(M;\Z) \times H_\ast(M;\Z) \to \Z$
denote  the natural nondegenerate pairing.  In de Rham notation, $\lan a,B \ran=\int_B a$. The intersection pairing is defined by
\[
(\ ,\ ):H^\ast(M;\Z) \times H^\ast(M;\Z) \to \Z,\quad
(a,b)=\lan ab, M\ran = \int_M a\wedge b.
\]
We have
$\lan ab, M\ran = \lan a,B\ran = \lan b,A \ran$.
This is a nondegenerate symmetric bilinear form.   

We are interested primarily in the cup product operation on cohomology, and its generalization to the quantum product.  It is convenient for this purpose to specify the cup product by giving
its \ll structure constants\rr with respect to a basis. We can choose bases as follows
\[
H_\ast(M;\Z) = \bigoplus_{i=0}^s \Z A_i, \quad
H^\ast(M;\Z) = \bigoplus_{i=0}^s \Z a_i
\]
and then define dual cohomology classes
$b_0,\dots,b_s$ 
by $(a_i,b_j)= \de_{ij}$. Then for any $i,j$ we have
\[
a_ia_j=\sum_{i,j,k} \la_{ijk} b_{k}
\]
where
\[
\la_{ijk}= \lan a_ia_ja_k, M\ran=
\int_M a_i\wedge a_j\wedge a_k =
\sharp\  A_i\cap A_j \cap  A_k.
\]
Note that the intersection form itself can be specified in a similar way by the integers
\[
(a_i,a_j)=
\int_M a_i\wedge a_j =
\sharp\  A_i\cap A_j.
\]
It is modern practice to regard a cohomology theory as a functor from a certain category of topological spaces to the category of groups which satisfies the Eilenberg-Steenrod axioms.  However, from the point of view of quantum cohomology, which we shall consider next, it is preferable to regard the cohomology of $M$ as a collection of numbers
\[
\sharp\,  A_i\cap A_j,\quad \sharp\,  A_i\cap A_j \cap  A_k.
\]
This primitive viewpoint is necessary because quantum cohomology does not (at present) have a functorial characterization.

If we denote $\sharp\,  A_i\cap A_j \cap  A_k$ by
$\lan A_i\vert A_j\vert A_k\ran_0$, the quantum product is obtained by extending the above collection of numbers to an infinite sequence of \ll Gromov-Witten invariants\rr $\lan A_i\vert A_j\vert A_k\ran\subD$ for any $D\in \pi_2(M)\cong H_2(M;\Z)$, as follows.
Let $p,q,r$ be three distinct points in $\C P^1$.  
Informally, the definition is
\[
\ABC\subD=\sharp\ 
\HolD^{ A,p} \cap \HolD^{ B,q} \cap \HolD^{ C,r}
\]
where
\[
\HolD^{ A,p}=
\{ \textrm{holomorphic maps}\ f:\C P^1\to M \st
f(p)\in A\ \textrm{and}\  [f]=D \},
\]
and $[f]$ is the homotopy class of $f$. 

As explained in \cite{CoKa99} (for example), $\ABC\subD$ can be defined rigorously under very general conditions.  The definition
has the form
\[
\ABC\subD\ = \int_{[\overline{M}(D)]^{\text{virt}}} 
ev_1^\ast a\wedge ev_2^\ast b\wedge ev_3^\ast c
\]
where $M(D)$ is a certain moduli space of \ll curves\rrr,
$\overline{M}(D)$ is a compactification of $M(D)$, obtained by adding suitable \ll boundary components\rrr, and
$[\overline{M}(D)]^{\text{virt}}$ denotes the \ll virtual fundamental class\rr over which integration is carried out.
The evaluation map $ev_i:\overline{M}(D)\to M$ assigns to a curve its value at a given $i$-th basepoint ($i=1,2,3$).  

To define $a\circ_t b$ for 
$a,b\in H^\ast M$ and $t\in H^2 M$, it suffices
to define $\lan a\circ_t b, C\ran$ for all $C\in H_\ast M$.
The definition is:

\begin{definition} Assume that $M$ is a Fano manifold.  Then the quantum product 
$a\circ_t b$ of two cohomology classes $a,b\in H^\ast M$ is
defined by
\[
\lan a\circ_t b, C\ran=\sum_{D\in H_2(M;\Z)}\,\ABC\subD\, e^{\lan t,D\ran}.
\]
\end{definition}

\no The Fano condition ensures that the sum is finite, and in this case one has the following nontrivial theorem (see section 8.1 of \cite{CoKa99}):

\begin{theorem}\label{product}  For each $t\in H^2 M$,  $\circ_t$ is a commutative, associative product operation on $H^\ast M$.
\end{theorem}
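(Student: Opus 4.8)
The plan is to verify the two defining properties separately, treating commutativity as a formal consequence of symmetry and reserving the real work for associativity. Commutativity is immediate from the definition: the three-point invariant $\ABC\subD$ is an integral of $ev_1^\ast a\wedge ev_2^\ast b\wedge ev_3^\ast c$ over $[\overline M(D)]^{\text{virt}}$, and the three marked points $p,q,r\in\C P^1$ enter symmetrically, since any permutation of them is induced by an automorphism of $\C P^1$. As $a,b$ have even degree they also commute under $\wedge$, so $\ABC\subD=\lan B\vert A\vert C\ran\subD$; summing over $D$ gives $\lan a\circ_t b,C\ran=\lan b\circ_t a,C\ran$ for all $C$, and nondegeneracy of $\lan\ ,\ \ran$ yields $a\circ_t b=b\circ_t a$.

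For associativity I would show that $((a\circ_t b)\circ_t c,\,e)=(a\circ_t(b\circ_t c),\,e)$ for every $e\in H^\ast M$, whence nondegeneracy of the intersection form $(\ ,\ )$ gives the identity of products. The first step is to expand each side in the bases $a_0,\dots,a_s$ and $b_0,\dots,b_s$. Using $x=\sum_k(x,b_k)\,a_k$ together with $(y,e)=\lan y,\PD(e)\ran$ and the definition of $\circ_t$, the left side becomes $\sum_k\sum_{D_1,D_2}\lan A\vert B\vert B_k\ran_{D_1}\,\lan A_k\vert C\vert E\ran_{D_2}\,e^{\lan t,D_1+D_2\ran}$, where $E=\PD(e)$, and the right side becomes the analogous expression with the grouped pair $(a,b)$ replaced by the grouped pair $(b,c)$. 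The required associativity is therefore the equality of these two triple sums — the WDVV equation — the intermediate summation over $k$ being exactly a contraction with the diagonal class $\sum_k a_k\otimes b_k$ of $M\times M$.

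The geometric heart of the argument, and the main obstacle, is to show that both groupings compute one and the same four-point number. I would introduce the four-point invariant $\int_{[\overline M_{0,4}(M,D)]^{\text{virt}}} ev_1^\ast a\wedge ev_2^\ast b\wedge ev_3^\ast c\wedge ev_4^\ast e$ and the forgetful map $\pi:\overline M_{0,4}(M,D)\to\overline M_{0,4}\cong\C P^1$ that records only the stabilized configuration of the four marked points. The boundary of $\overline M_{0,4}$ consists of three points, one for each partition of $\{1,2,3,4\}$ into two pairs, and any two points of $\C P^1$ are linearly equivalent. Pulling this rational equivalence back by $\pi$ shows that the contribution of the preimage of the partition $\{1,2\mid 3,4\}$ equals that of $\{1,4\mid 2,3\}$. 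The splitting (gluing) axiom for the virtual fundamental class then evaluates each boundary contribution: a stable map over such a point has domain with two components meeting at a node, the degree splits as $D=D_1+D_2$, and inserting the diagonal $\sum_k a_k\otimes b_k$ at the node factorizes the integral into precisely the sum of products of three-point invariants appearing in one grouping of the previous paragraph. The two boundary partitions reproduce the two groupings, so their equality is WDVV.

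I expect the genuinely hard points to be exactly those that rest on the deformation-theoretic construction of $[\,\cdot\,]^{\text{virt}}$ rather than on elementary geometry: the compatibility of the virtual class with the forgetful map $\pi$ (legitimizing the pull-back of the rational equivalence on $\overline M_{0,4}$) and its compatibility with normalization at the node (making the factorization through the diagonal exact). Granting these axioms of Gromov--Witten theory, the derivation of WDVV is formal, and reading the second paragraph backwards converts WDVV into the associativity of $\circ_t$; with commutativity already in hand this proves the theorem. Throughout, the Fano hypothesis keeps every sum over $D$, $D_1$, $D_2$ finite.
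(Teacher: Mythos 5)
The paper does not actually prove this theorem: it states it as a ``nontrivial theorem'' and defers entirely to section 8.1 of \cite{CoKa99}, adding only the remark that commutativity (rather than supercommutativity) holds because the odd-dimensional cohomology is assumed to vanish. Your outline is precisely the argument of that reference (going back to \cite{KoMa94}): commutativity from the symmetry of $\lan A\vert B\vert C\ran\subD$ in the even-degree case, and associativity as the WDVV identity, obtained by pulling back the linear equivalence of the boundary points of $\overline{M}_{0,4}\cong\C P^1$ under the forgetful map and evaluating each boundary contribution via the splitting axiom with the diagonal class $\sum_k a_k\otimes b_k$ inserted at the node; your bookkeeping with the dual bases and the partitions $\{1,2\mid 3,4\}$, $\{1,4\mid 2,3\}$ is correct. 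You have also correctly isolated where the real difficulty lives --- the compatibility of the virtual fundamental class with the forgetful map and with the normalization at the node --- which are exactly the foundational axioms of Gromov--Witten theory whose verification is the reason the paper treats this statement as a black box.
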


\no In general the quantum product is supercommutative, but it is commutative here as we are assuming that the odd-dimensional cohomology of $M$ is zero.
We denote the algebra $(H^\ast M, \circ_t)$ (more precisely, family of algebras) by $QH^\ast M$, and refer to it as the quantum cohomology algebra of $M$.  

Since the second cohomology group plays a prominent role in quantum cohomology,  we shall assume that the basis $A_0,\dots,A_s$ has been chosen so that $A_1,\dots,A_r$ span $H_2 M$ and $b_1,\dots,b_r$ span $H^2 M$.
A general element of $H^2M$ will be written
$t=\sum_1^r t_i b_i\in H^2M$; the Poincar\'e dual homology class is then $T=\sum_1^r t_i A_i$.
It is conventional to introduce the notation $q_i=e^{t_i}$.  However, $\b_i$ always denotes the derivative with respect to $t_i$; thus $\b_i=q_i\b/\b q_i$.

\begin{example}  The standard basis of $H^\ast \C P^n$ is
$1,b,b^2,\dots,b^n$ where $b$ is the (Poincar\'e dual of the) hyperplane class. We take this basis as $b_0,\dots,b_n$. A well known calculation 
(see section 8.1 of \cite{CoKa99}) gives the quantum products
\[
b_i\circ_t b_j =
\begin{cases}
b_{i+j}\quad\textrm{if}\ 0\le i+j\le n\\
b_{i+j-(n+1)}q\quad\textrm{if}\ n+1\le i+j\le 2n.
\end{cases}
\]
In particular we obtain the presentation 
$QH^\ast \C P^n \cong
\C[b,q]/( b^{n+1}-q)$, in which  $q$ is regarded as a formal parameter, rather than the number $e^t$. {\em  In this article we shall switch between these two versions of quantum cohomology without further comment.}
\qed
\end{example} 

\begin{example}\label{hyper} 
Let $M=M^k_N$ be a nonsingular complex hypersurface of degree $k$ in $\C P^{N-1}$.  All such hypersurfaces have the same cohomology algebra. 
The Lefschetz Theorems show that $H_i M^k_N\cong H_i\C P^{N-1}$ for $0\le i\le 2N-4$ except possibly for the middle dimension $i=N-2$, and that the subalgebra $H^\sharp M^k_N$ generated by $H^2 M^k_N$ has additive generators represented by cycles of the form $M^k_N\cap\C P^j$.  To avoid odd-dimensional cohomology, and make use of the above cycles, we shall restrict attention to the subalgebra $H^\sharp M^k_N$  and its quantum version $QH^\sharp M^k_N$.  

Let us write $b=b_1$ for the hyperplane class, i.e.\ the cohomology class Poincar\'e dual to $M^k_N\cap\C P^{N-2}$. We have $c_1(TM^k_N)=(N-k)b$.  It follows that $M^k_N$ is Fano if and only if $1\le k\le N-1$.  
The classes $1,b,\dots,b^{N-2}$ are an additive basis (over $\C$) for $H^\sharp M^k_N=H^\sharp (M^k_N;\C)$, and the intersection form is given by $(b^i,b^j)= k\de_{i+j,N-2}$.

As a concrete example, we shall just give the quantum products for $M^3_5$.  All quantum products in this case follow from
\begin{align*}
b\circ_t 1\,\,&=b\\
b\circ_t \,b\,\,&= b^2 + 6q\\
b\circ_t b^2&= b^3 + 15qb\\
b\circ_t b^3&= 6qb^2+36q^2
\end{align*}
(see \cite{CoJi99}, \cite{Ji02}).
In particular $b\circ_t b\circ_t b=b^3 + 21qb$ and $b\circ_t b\circ_t b\circ_t b=27qb\circ_t b$. We deduce that
$QH^\sharp M^3_5\cong 
{\C[b,q]}/{(b^4-27qb^2)}$.
\qed
\end{example}

As we have mentioned, the construction  $M \mapsto QH^\ast M$ is, unfortunately,  not functorial. This is perhaps not surprising in view of the fact that the Gromov-Witten invariants $\ABC\subD$ contain much more information than the isomorphism class of the algebra $QH^\ast M$. Therefore we are led to consider other objects constructed from the Gromov-Witten invariants, and the most prominent of these is the quantum D-module $\calM$ (see  \cite{Gi95-1}, \cite{Gi95-2}).  

Let us consider the space of sections of the trivial vector bundle
\[
H^2M \times H^\ast M \to H^2M
\]
or, more generally, the space of sections over an open subset $N$ of $H^2 M$.  This is just the vector  space consisting of all $H^\ast M$-valued functions on $N$.
The quantum product $\circ_t$ on $H^\ast M$ gives a way of multiplying sections.  Thus the space of sections becomes an algebra over $\calH_t$. 

Next,  we introduce the action  of a ring of differential operators on sections, i.e.\ a D-module structure.  We do this by defining a connection $\nabla$, called the Dubrovin connection or Givental connection.   The definition is:
$\nabla_{\b_i}=\b_i+\tfrac1\h b_i\circ_t$, for $1\le i\le r$, where $\h$ is a parameter.  If $\om_i$ is the matrix of quantum multiplication by $b_i$ with respect to the basis $b_0,\dots,b_s$, then we can also write $\nabla_{\b_i}=\b_i + \frac1\h\om_i$.  
The D-module structure is specified by saying that  $\b_i$ acts as  $\nabla_{\b_i}$.
This extends to an action of the ring of all differential operators if the identity $\nabla_{\b_i}\nabla_{\b_j}=\nabla_{\b_j}\nabla_{\b_i}$ holds for all $i,j$, and this 
identity does hold because the connection is flat --- a consequence of the properties of the quantum product (see section 8.5 of \cite{CoKa99}).

It is convenient to incorporate the parameter $\h$ into the ring of differential operators. Thus, we shall take as  ring of differential operators the ring $D^\h$ which is generated by $\h\b_1,\dots,\h\b_r$ and whose elements have coefficients which are holomorphic in $t\in N$ and holomorphic in $\h$ in a neighbourhood of $\h=0$.  This acts on the enlarged space of sections, in which the sections are allowed also to depend on $\h$ (holomorphically,  in a neighbourhood of $\h=0$). The quantum D-module $\calM$ is defined to be this enlarged space of sections.  (We use the generic term \ll quantum D-module\rrr, rather than \ll quantum $D^\h$-module\rrr, for simplicity, but $\calM$ is of course a module over the ring $D^\h$.)

The most important property of the quantum D-module is its close relation with the quantum cohomology algebra $QH^\ast M$.  We shall discuss the relation in this section {\em under the assumption that $H^2 M$ generates $H^\ast M$ as an algebra and $M$ is a Fano manifold.}
These hypotheses imply that $QH^\ast M$ has a presentation
 \[
QH^\ast M=
\C[b_1,\dots,b_r,q_1,\dots,q_r]/
(\calR_1,\dots,\calR_u )
\]
and $H^\ast M$ has a presentation
\[
H^\ast(M;\C)=
\C[b_1,\dots,b_r]/(R_1,\dots,R_u )
\]
where $\calR_i\vert_{q=0}=R_i$.  However, there is a more precise connection between $QH^\ast M$ and $H^\ast M$, which generalizes to a precise connection between $\calM$ and $QH^\ast M$, so let us review this.

First, for any polynomial $c$ in \ll abstract variables\rr $b_1,\dots,b_r,q_1,\dots,q_r$, let us
denote by $[c]$ the corresponding element of $QH^\ast M$, and by $[[c]]$ the corresponding element of $H^\ast M\otimes\C[q_1,\dots,q_r]$. We claim that there exist
suitable polynomials $c_0,\dots,c_s$ such that
\begin{align*}
[b_i][c_j]&=\sum_{k=0}^s (\om_i)_{kj} [c_k]\\
[[b_i]]\circ_t[[c_j\vert_{q=0}]]&=\sum_{k=0}^s (\om_i)_{kj} [[c_k\vert_{q=0}]]
\end{align*}
for $1\le i\le r$.  In other words, if we identify 
$QH^\ast M$ with $H^\ast M\otimes\C[q_1,\dots,q_r]$ via the bases given by $c_0,\dots,c_s$
 and  $c_0\vert_{q=0},\dots,c_s\vert_{q=0}$, then 
quantum multiplication in $H^\ast M$ corresponds to the natural multiplication in $QH^\ast M$.
This follows from the observation (Theorem 2.2 of \cite{SiTi97}) that \ll any quantum polynomial may be written as the same classical polynomial plus lower classical terms, and vice versa\rrr.  Namely, if we regard $b_j$ as a polynomial (with respect to the cup product) in 
$b_1,\dots,b_r$,   then the polynomial $c_j$ is obtained by expressing $b_j$ as a polynomial with respect to the quantum product in 
$b_1,\dots,b_r$.  The polynomials $c_j$ satisfy $c_j\vert_{q=0}=b_j$.  

Exactly the same method gives the analogous result below for $\calM$, because any polynomial in the operators
$\h\b_1+\om_1,\dots,\h\b_r+\om_r$ can be expressed as the same polynomial in $\h\b_1,\dots,\h\b_r$ plus terms of lower order. Moreover, since the lower order terms contain \ll additional\rr powers of $\h$, if we replace $\h\b_i$  by $b_i$ (for each $i$)  then $\h$  set equal to $0$,  these lower order terms all vanish and we are left with the original polynomial expressed in terms of the variables $b_1,\dots,b_r$.

\begin{theorem}\label{match}  The quantum D-module is  isomorphic to a D-module of the form
$D^\h/(D_1,\dots,D_u)$,
where $D_1,\dots,D_u$ are converted to $\calR_1,\dots,\calR_u$ when $\h\b_i$ is replaced by $b_i$ (for each $i$)  then $\h$  set equal to $0$. 
Furthermore, there exists a  basis $[P_0],\dots,[P_s]$ of 
$D^\h/(D_1,\dots,D_u)$, with respect to which the (connection) matrix of $\h\b_i$ is $\om_i$. This basis is converted to $[c_0],\dots,[c_s]$ when $\h\b_i$ is replaced by $b_i$ (for each $i$) then $\h$ set equal to $0$. 
\end{theorem}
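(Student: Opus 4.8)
The plan is to make the quantum D-module concrete and then to lift the algebra isomorphism $QH^\ast M\cong\C[b_1,\dots,b_r,q_1,\dots,q_r]/(\calR_1,\dots,\calR_u)$ to the level of differential operators, following the matrix-to-scalar procedure of Section~\ref{pde}. Represent $\calM$ as the space of $H^\ast M$-valued functions of $(t,\h)$, a free module of rank $s+1$ over the ring $\calH^\h$ of coefficient functions (holomorphic in $t$ and in $\h$ near $0$), on which the generator $\h\b_i\in D^\h$ acts on a section $v$ by $\h\b_i\cdot v=\om_i v+\h\,\b_i v$, the last term being the ordinary derivative. I take the constant section $\mathbf 1=b_0$ as candidate cyclic vector. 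The engine of the argument is the principle asserted in the remark preceding the theorem, which I would prove by induction on cohomological degree: for any polynomial $F$ in $b_1,\dots,b_r$, the section $F(\h\b_1,\dots,\h\b_r)\cdot\mathbf 1$ equals the section represented by the class $[F]\in QH^\ast M$ plus a term divisible by $\h$. The inductive step is immediate from the displayed action, since $\om_i$ realizes quantum multiplication by $b_i$; crucially, in the natural grading (with $\deg b_i=\deg\h=2$ and $\deg q_i>0$, using that $M$ is Fano) the explicit factor $\h$ in the correction forces it to involve only classes of \emph{strictly lower degree} than $[F]$, which is what makes the later inductions terminate.

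Granting the principle --- which extends verbatim to polynomials with $q_i$-coefficients, since $[\h\b_i,q_j]=\h\,\delta_{ij}q_j$ contributes only further $\h$-terms --- I would construct the operators by downward correction, exploiting the second half of the remark: every correction carries an explicit factor $\h$ and hence evaporates under the specialization $\h\b_i\mapsto b_i$, $\h\mapsto0$. Pick polynomials $c_j$ with $[c_j]=b_j$ and $c_j\vert_{q=0}=b_j$, together with the relations $\calR_i$ of the given presentation. By the principle, $c_j(\h\b)\cdot\mathbf 1=b_j+\h\rho_j$ and $\calR_i(\h\b)\cdot\mathbf 1=\h\sigma_i$, where $\rho_j$ and $\sigma_i$ are $\calH^\h$-combinations of constant sections $b_k$ of strictly lower degree. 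Inducting on degree, I may assume each such $b_k$ has already been written as $P_k\cdot\mathbf 1$, so that I can set $P_j:=c_j(\h\b)-\h\sum_k g_{kj}P_k$ and $D_i:=\calR_i(\h\b)-\h\sum_k g'_{ki}P_k$ with $P_j\cdot\mathbf 1=b_j$ and $D_i\cdot\mathbf 1=0$ \emph{exactly}. Because the subtracted terms carry an explicit $\h$, the specialization sends $P_j\mapsto c_j$ and $D_i\mapsto\calR_i$, as the theorem demands.

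Two assertions are then formal. First, the sections $P_j\cdot\mathbf 1=b_j$ are an $\calH^\h$-basis of $\calM$, so $\mathbf 1$ is cyclic and, exactly as in Steps 1--4 of the matrix-to-scalar conversion of Section~\ref{pde}, $\calM\cong D^\h/\mathrm{Ann}(\mathbf 1)$. Second, the connection matrix of $\h\b_i$ in the frame $[P_0],\dots,[P_s]$ is $\om_i$: for the constant sections $\h\b_i\cdot b_j=\om_i b_j=\sum_k(\om_i)_{kj}b_k$, and transporting this through the isomorphism gives $\h\b_i[P_j]=\sum_k(\om_i)_{kj}[P_k]$. It remains only to identify $\mathrm{Ann}(\mathbf 1)$ with the ideal $(D_1,\dots,D_u)$.

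This last identification is the main obstacle. Each $D_i$ annihilates $\mathbf 1$, so there is a surjection $D^\h/(D_1,\dots,D_u)\to\calM$, and it is enough to show that $[P_0],\dots,[P_s]$ already span $D^\h/(D_1,\dots,D_u)$ over $\calH^\h$: a spanning family of $s+1$ elements mapping onto the rank-$(s+1)$ basis $\{b_j\}$ of $\calM$ must be a basis, forcing the surjection to be an isomorphism. To prove the spanning I would first reduce an arbitrary operator, using the commutation relations, to an $\calH^\h$-combination of ordered monomials in the $\h\b_i$, and then reduce each monomial modulo $(D_1,\dots,D_u)$ by \emph{lifting} the corresponding reduction $[b^\alpha]=\sum_j\mu_j(q)b_j$ in $QH^\ast M$; the lift is valid modulo $\h\,D^\h$, and one closes the gap by an induction on the power of $\h$. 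The grading is what makes this work: in each fixed total degree only finitely many monomials appear and every correction strictly raises the power of $\h$ while preserving the degree, so no completion of $D^\h$ is needed. The genuinely delicate inputs are precisely those flagged in the footnotes of Section~\ref{pde} --- the existence of a cyclic vector and of a finite generating set for the annihilator may require enlarging $\calH$ --- but here the $c_j$ and the $\calR_i$ furnish these objects directly from the geometry.
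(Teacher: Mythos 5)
Your proposal is correct and takes essentially the same route as the paper: the engine in both cases is the Siebert--Tian principle that any polynomial in the operators $\h\b_i+\om_i$ equals the same polynomial in the $\h\b_i$ plus lower-order terms carrying explicit powers of $\h$, which is precisely the argument the paper sketches in the paragraphs preceding Theorem \ref{match}. You supply the bookkeeping (cyclicity of the constant section $1$, identification of its annihilator with the left ideal $(D_1,\dots,D_u)$, and termination of the inductions via the Fano grading) that the paper leaves implicit or delegates to the Birkhoff factorization of Theorem \ref{basis} and the cited references.
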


The basis $[P_0],\dots,[P_s]$ gives a correspondence between a scalar system and a matrix system (cf.\ section \ref{pde}), both of which are referred to as the quantum differential equations.  A particular choice of $J$-function (see section 5.2 of \cite{Gu08} and the original paper \cite{Gi96}) has the remarkable property that it can be written explicitly in terms of Gromov-Witten invariants. 

\begin{example}\label{hyper2}  Let us continue Example \ref{hyper} by finding a relation $D_1$ and a basis $[P_0],[P_1],[P_2],[P_3]$  as predicted by the above theorem.  
(The computation of $D_1$ is analogous to steps 1-5 in section \ref{pde}; it would be possible to obtain a version of Theorem \ref{match} for more general modules over $D^\h$ this way.)
From the quantum products, the D-module structure is given by
\[
\h\b\cdot
\bp
f_0\\ f_1\\ f_2\\ f_3
\ep
=
(\h\b+\om)
\bp
f_0\\ f_1\\ f_2\\ f_3
\ep,
\quad
\om=
\bp 
 &6q & & 36q^2 \\
 1 & &15q & \\
  & 1 & & 6q \\
   & & 1 & 
\ep.
\]
Writing $e_0,e_1,e_2,e_3$ for the standard basis of column vectors, repeated application of $\h\b$ gives:
\begin{align*}
(\h\b)^1\cdot e_0&=e_1\\
(\h\b)^2\cdot e_0&=6qe_0+e_2\\
(\h\b)^3\cdot e_0&=6\h q e_0+21qe_1+e_3.
\end{align*}
This shows that $e_0$ is a cyclic element.
Now we \ll solve\rr for $e_0,e_1,e_2,e_3$, to obtain:
\begin{align*}
e_1&=\ \h\b\cdot e_0\\
e_2&=\left((\h\b)^2-6q\right)\cdot e_0\\
e_3&=\left(
(\h\b)^3-21q\h\b-6\h q
\right)\cdot e_0.
\end{align*}
It follows that the matrix of $\h\b$ with respect to 
$[1]$, $[\h\b]$, $[(\h\b)^2-6q]$,
$[(\h\b)^3-21q\h\b-6\h q]$ is the above matrix $\om$, so
this is the required basis.  

To obtain a relation for the D-module, i.e.\ a differential operator which annihilates the cyclic element, we differentiate once more:
\[
(\h\b)^4\cdot e_0=
162q^2e_0 + 27qe_2 + 27\h q e_1 +  6\h^2 q e_0
\]
Substituting for $e_1,e_2,e_3$, we obtain
\[
\left(
(\h\b)^4 - 27q(\h\b)^2 -27\h q(\h\b) - 6\h^2q
\right)\cdot e_0=0.
\]
We conclude that
$\calM^{M^3_5}=D^\h/\left(
(\h\b)^4 - 27q(\h\b)^2 -27\h q(\h\b) - 6\h^2q
\right)$.

At the commutative level, i.e.\ in the quantum cohomology algebra, the analogous calculation would give
\begin{align*}
b^1\cdot 1&=b\\
b^2\cdot 1&=6q+b^2\\
b^3\cdot 1&=21qb
\end{align*}
(these were stated at the end of Example \ref{hyper}; the notation $\,b^i\cdot\,$ here indicates the $i$-fold iteration of $b\circ_t$). Then
\begin{align*}
b^1&=\ b\cdot 1\\
b^2&=\left(b^2-6q\right)\cdot 1\\
b^3&=\left(
b^3-21qb
\right)\cdot 1.
\end{align*}
Thus we obtain $c_0=1$, $c_1=b$, $c_2=b^2-6q$,
$c_3=b^3-21qb$.   Applying $b$ again leads to the relation $b^4-27qb^2$, as expected.
\qed
\end{example}

The replacement of $QH^\ast M$ by $\calM$ is not unlike the process of quantization in physics. In the above
example the relation $b^4-27qb^2$ is \ll quantized\rr to the relation $(\h\b)^4 - 27q(\h\b)^2 -27\h q(\h\b) - 6\h^2q$.    For $\C P^n$ the same argument shows that the relation $b^{n+1}-q$ is converted to the naive quantization $(\h\b)^{n+1}-q$.  However, the naive quantization does not always work: there are examples where the \ll naive quantization\rr is not a quantization at all, because it gives a D-module of the wrong rank.  (For the example above, the naive quantization $(\h\b)^4 - 27q(\h\b)^2$ gives the correct rank, but it gives the wrong quantum products. We shall return to this point in section \ref{conclusion}.) 

In general, the parameter $\h$ keeps track of the difference between the commutative and noncommutative multiplications.
The incompatibility of the commutative and noncommutative situations reveals the key property of the quantum D-module. Namely,  the action of $\h\b_i$ on the quantum D-module  matches exactly the action of $b_i$ on the quantum cohomology algebra --- both are given by the same matrix.  However, to accomplish this, a careful choice of basis is necessary in each case, and these bases (like the relations) do not match exactly, only \ll mod $\h$\rrr.  If the bases did match exactly (for example, $(\h\b)^i$ and $b^i$), then the matrices of $\h\b_i$ and $b_i$ would not in general be the same.

In the above example, the basis  $[P_0],[P_1],[P_2],[P_3]$ was produced by modifying the monomial basis $[1], [\h\b], [(\h\b)^2], [\h\b)^3]$, and this involved solving a system of linear equations --- by Gaussian elimination. Gaussian elimination may be described as the process of finding a  lower triangular/upper triangular factorization of matrices.  It turns out that such a modification is always possible (under our 
assumption that $H^2 M$ generates $H^\ast M$ and $M$ is Fano),  by using a suitable factorization which takes account of the parameter $\h$, 
known as
the Birkhoff factorization, from \cite{PrSe86}.  This says that \ll almost every\rr loop $\ga\in \La \glsc$ (i.e.\ almost every smooth map $\ga:S^1\to \glsc$)
may be factorized in the form
\[
\ga(\h) = 
\underbrace{
(a_0+\tfrac1\h a_1 + \tfrac1{\h^2} a_2+\cdots)}_{\textstyle\ga_-(\h)}
\underbrace{
(b_0+\h b_1 + \h^2 b_2+\cdots)}_{\textstyle\ga_+(\h)}.
\]
The subgroup of $\La \glsc$ consisting of \ll negative\rr (\ll positive\rrr) loops is denoted $\La_-\glsc$ ($\La_+\glsc$). The meaning of \ll almost every\rr is that the product set
$\La_-\glsc\, \La_+\glsc$ is open and dense in $\La\glsc$.

\begin{theorem}\label{basis}
Assume that  
$M$ is Fano, and that $H^2 M$ generates $H^\ast M$.
 Let $L^\mon$ be any solution of 
$(L^\mon)^{-1}dL^\mon=\Om^\mon$, where
$\Om_i^\mon$ is the matrix of $\h\b_i$ with respect to a monomial\footnote{If $\dim H^2 M=1$ and $\dim H^\ast M=s+1$, a monomial basis is 
$[1], [\h\b],\dots,[(\h\b)^s]$. The meaning of monomial basis in general is explained in section 6.6 of \cite{Gu08}.}
 basis $[P_0^\mon],\dots,[P_s^\mon]$. 
 Let $L^\mon=L_-^\mon L_+^\mon$ be the\footnote{It can be shown that this factorization is possible in a punctured neighbourhood of $q=0$, if $L_-^\mon$ is allowed to be multiple-valued. See sections 5.3 and 5.4 of \cite{Gu08}.}
Birkhoff factorization.  Then the Dubrovin/Givental connection is given by $\Om=L^{-1}dL$ where $L=L_-^\mon$.  A basis 
$[P_0],\dots,[P_s]$ of the type predicted in Theorem
\ref{match}  is given by
$(L_+^\mon)^{-1}\cdot P_0^\mon, 
(L_+^\mon)^{-1}
\cdot P_1^\mon, \dots,
(L_+^\mon)^{-1} \cdot P_s^\mon$, where  $(L_+^\mon)^{-1}\cdot P_i^\mon$ means $\sum_{j=0}^{s} (L_+^\mon)^{-1}_{ji} P_j^\mon$.
\end{theorem}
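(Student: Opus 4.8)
The plan is to reduce both assertions to a single gauge-transformation computation and then to pin down the resulting connection matrix by a Liouville argument in the variable $\h$. Write $G=(L_+^\mon)^{-1}$, so that the proposed basis is $[P_j]=\sum_l G_{lj}[P_l^\mon]$. A direct calculation (of the kind carried out in the passage \ll How to convert a scalar system to a matrix system\rr of section~\ref{pde}) shows that passing from the monomial basis to $\{[P_j]\}$ replaces the matrix $\Om_i^\mon$ of $\h\b_i$ by
\[
\Om_i=G^{-1}\Om_i^\mon G+G^{-1}\h\b_i G.
\]
Next I would substitute $G=(L^\mon)^{-1}L_-^\mon$ (which follows from the Birkhoff factorization $L^\mon=L_-^\mon L_+^\mon$) together with the defining relation $(L^\mon)^{-1}\h\b_i L^\mon=\Om_i^\mon$. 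The two \ll cross terms\rr cancel, leaving
\[
\Om_i=(L_-^\mon)^{-1}\,\h\b_i\,L_-^\mon,
\]
which is exactly the statement that the new connection is $\Om=L^{-1}dL$ with $L=L_-^\mon$ (cf.\ part~(4) of Theorem~\ref{flat}). Thus the first sentence of the theorem and the identification of the new connection matrix are one and the same computation.

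The crux is to show that this $\Om_i$ is independent of $\h$, and I would read it off in two ways. From $\Om_i=(L_-^\mon)^{-1}\h\b_i L_-^\mon$, using the standard normalization $L_-^\mon=I+a_1/\h+a_2/\h^2+\cdots$ of the negative factor (so that $a_0=I$ is constant in $t$), the product $\h\b_i L_-^\mon=\b_i a_1+\b_i a_2/\h+\cdots$ contains no positive power of $\h$, whence $\Om_i$ is holomorphic at $\h=\infty$. On the other hand $G=(L_+^\mon)^{-1}$ is a positive loop, $\h\b_i G$ is of order $\h^1$, and $\Om_i^\mon$ is holomorphic at $\h=0$ (its entries lie in $D^\h$); hence $G^{-1}\Om_i^\mon G+G^{-1}\h\b_i G$ is holomorphic at $\h=0$. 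A matrix-valued function of $\h$ holomorphic on all of $\P^1$ is constant, so $\Om_i$ does not depend on $\h$.

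It remains to identify the constant matrix $\Om_i$ with $\om_i$ and the reduced basis with $[c_0],\dots,[c_s]$. Since $\Om_i$ is $\h$-independent I may evaluate at $\h=0$: the term $G^{-1}\h\b_i G$ vanishes there, leaving $\Om_i=G_0^{-1}\,(\Om_i^\mon|_{\h=0})\,G_0$ with $G_0=G|_{\h=0}$. By the quantization correspondence established before Theorem~\ref{match} (replace $\h\b_i$ by $b_i$ and set $\h=0$), reducing mod $\h$ turns $\calM$ into $QH^\ast M$ and the action of $\h\b_i$ into quantum multiplication by $b_i$; hence $\Om_i^\mon|_{\h=0}$ is the matrix of $b_i\circ_t$ in the reduced monomial basis, and $\{[P_j]\}$ reduces to a basis $\{\bar P_j\}$ of $QH^\ast M$. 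The relation $\Om_i=G_0^{-1}(\Om_i^\mon|_{\h=0})G_0$ says precisely that $[b_i]\,\bar P_j=\sum_k(\Om_i)_{kj}\bar P_k$, which is the system of equations characterizing $c_0,\dots,c_s$; together with the normalization $\bar P_0=1$ this forces $\bar P_j=[c_j]$ and $\Om_i=\om_i$.

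The main obstacle I anticipate is the analytic input for the Liouville step rather than the algebra: one must know that the Birkhoff factorization of $L^\mon$ exists and can be normalized with $L_-^\mon(\infty)=I$ (this holds only in a punctured neighbourhood of $q=0$, with $L_-^\mon$ possibly multivalued, as in the footnotes), and that the monomial connection matrix $\Om_i^\mon$ is genuinely holomorphic at $\h=0$. A secondary subtlety is the final identification: one must check that the structural equations above have a unique solution once the leading basis element is normalized, so that the reduced basis is forced to coincide with $\{[c_j]\}$ rather than merely to satisfy the same relations.
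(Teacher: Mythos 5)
The paper offers no proof of this theorem at all: immediately after the statement it simply says ``For this we refer to \cite{Gu05} and section 6.6 of \cite{Gu08}''. So your proposal can only be measured against those references, and in outline it rests on the same mechanism they use: a gauge computation showing that the basis $(L_+^\mon)^{-1}\cdot P_i^\mon$ has connection form $(L_-^\mon)^{-1}dL_-^\mon$, followed by a Liouville argument (holomorphy at $\h=0$ from the positive factor, holomorphy at $\h=\infty$ from the normalized negative factor) showing that this form is $\tfrac1\h\times(\h\text{-independent})$. Those two steps of yours are correct, as is the observation that setting $\h=0$ and replacing $\h\b_i$ by $b_i$ is a ring homomorphism onto $QH^\ast M$, so that your $\Om_i^\mon\vert_{\h=0}$ is the matrix of $b_i\circ_t$ in the reduced monomial basis.

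The genuine gap is your final identification, and it is not the ``secondary subtlety'' you describe but the crux. The equations $[b_i]\bar P_j=\sum_k(\Om_i)_{kj}\bar P_k$ hold for \emph{every} basis of $QH^\ast M$, with $\Om_i$ the multiplication matrices of that basis; what characterizes $c_0,\dots,c_s$ (the display before Theorem \ref{match}) is that these matrices equal the \emph{specific} matrices $\om_i$, which is exactly what you are trying to prove, so the inference ``this forces $\bar P_j=[c_j]$ and $\Om_i=\om_i$'' is circular. Concretely, writing $\bar P_j=\sum_k S_{kj}b_k$, you get $\Om_i=S^{-1}\om_iS$, and this equals $\om_i$ only if $S$ commutes with every $\om_i$; the normalization $\bar P_0=1$ gives $Se_0=e_0$, which forces $S=I$ only \emph{after} $S$ is known to lie in the commutant (then quantum $H^2$-generation, via Siebert--Tian, gives $e_j=p_j(\om_1,\dots,\om_r)e_0$ and hence $Se_j=e_j$). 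Worse, the normalization itself cannot be established for ``any solution $L^\mon$'': if $C_0\in\glsc$ is constant and does not commute with all $\om_i$, then $C_0L^\mon$ is another solution, its Birkhoff factors are $C_0L_-^\mon C_0^{-1}$ and $C_0L_+^\mon$, and the resulting connection is $C_0\Om C_0^{-1}\neq\Om$. The rigidity you are missing is supplied in the cited proofs either by homogeneity (the Fano grading of $q$, $\h$, $\b$) or, most cleanly, by combining Theorem \ref{match} with \emph{uniqueness} of the Birkhoff factorization: the basis of Theorem \ref{match} provides a positive-loop gauge $\tilde G$ with $\Om^\mon=\tilde G^{-1}(\tfrac1\h\om)\tilde G+\tilde G^{-1}d\tilde G$; the normalized negative solution $\tilde L$ of $\tilde L^{-1}d\tilde L=\tfrac1\h\om$ then makes $\tilde L\tilde G$ a solution of the monomial system, and uniqueness of the factorization forces $L_-^\mon=\tilde L$ and $L_+^\mon=\tilde G$ for that solution. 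Your Liouville step is the right engine, but on its own it only shows the factored connection is of the form $\tfrac1\h\times(\h\text{-independent})$; pinning that matrix down to $\om_i$ is where the real work, and the choice of $L^\mon$, lies.
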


For this we refer to \cite{Gu05} and section 6.6 of \cite{Gu08}, where it is also shown that $L_+^\mon$ is of the form $L_+^\mon=Q_0(I+\h Q_1+\h^2 Q_2+\cdots+\h^N Q_N)$, i.e.\ a finite series in $\h$, and that the coefficient matrices $Q_0,Q_1,\dots,Q_N$ may be found by a simple algorithm.  
The advantage of $\calM$ over $QH^\ast M$ is that it contains all the Gromov-Witten invariants, and this algorithm shows how to extract them.  

\begin{example}\label{hyper3}  
We state the results of applying this algorithm to Example \ref{hyper2}, i.e.\  $QH^\sharp M^3_5$.     
First, with respect to the monomial basis $[1], [\h\b], [(\h\b)^2], [(\h\b)^3]$ the connection matrix is
\[
\Om^\mon=
\tfrac1\h
\bp 
 & & & 6q\h^2 \\
 1 & & & 27q\h \\
  & 1 & & 27q \\
   & & 1 & 
\ep.
\]
Then it turns out that $L_+^\mon=Q_0(I+\h Q_1)$, with
\[
Q_0=
\bp 
\, 1\,  & &6q &  \\
  & \,1\,& & 21q \\
  &  & 1&  \\
   & &  & 1
\ep,
\quad
Q_1=
\bp 
\ \  & \ \ & \ \ &  6q\\
  & & &  \\
  &  & &  \\
   & &  & 
\ep.
\]
Computing $(L_+^\mon)^{-1}\cdot (\h\b)^i$ gives the  answer $P_0=1, P_1= \h\b, P_2= (\h\b)^2-6q,
P_3= (\h\b)^3 - 21q\h\b - 6\h q$ that we obtained in Example  \ref{hyper2}.
\qed
\end{example}

\section{A D-module construction of integrable systems}\label{extensions}

An {\em integrable p.d.e.\ {\rm is}} a p.d.e.\ which can be written as a zero curvature condition $d\Om + \Om\wedge\Om=0$, where $\Om$ is given in terms of some auxiliary function(s) $u=u(z_1,\dots,z_r)$.  This concept is somewhat related to the \ll explicit solvability\rr of the p.d.e., and closely related to the concept of \ll integrable system\rrr.  
It is easy to write down connection forms $\Om$ which depend on auxiliary functions, then compute the condition $d\Om + \Om\wedge\Om=0$. However,  it is not easy to produce {\em nontrivial} examples this way.  In terms of D-modules, a random choice of ideal $I$ generally leads to a D-module $D/I$ of rank infinity or rank zero. 

We have seen that quantum cohomology leads to D-modules of finite rank.  It is natural to ask whether the quantum cohomology of a particular space can be regarded as a solution of an integrable p.d.e., and whether more general \ll quantum cohomology like\rr finite rank D-modules can be constructed.
Let us begin with two simple examples.  In this section, $D$ denotes $D_{z_1,\dots,z_r}$, but we omit $z_1,\dots,z_r$ when there is no danger of confusion.

\begin{example}\label{rankone} Let 
\[
T_1=\b_1+f,\quad T_2=\b_2+g
\]
where $f$ and $g$ are functions of $z_1,z_2$.  Clearly the rank of $D/(T_1,T_2)$ is either $1$ or $0$, since we have
$[\b_1]=-f[1]$, $[\b_2]=-g[1]$. The only question is whether $[1]=[0]$ in the D-module, and this depends on $f$ and $g$.  Alternatively, the rank is $1$ if and only if the solution space of the linear system 
$(\b_1+f)y=0, (\b_2+g)y=0$ has dimension $1$. The condition for this is $\b f/\b z_2 = \b g/\b z_1$, which can be regarded as a p.d.e.\ for the functions $f,g$. 
\qed
\end{example}

\begin{example}\label{KdV}
Let
\[
T_1=\b_1^2+u,\quad
T_2=\b_2-
(\tfrac12 u_{z_1}-u\b_1)
\] 
where $u$ is a given function of $z_1,z_2$.
It is clear that the rank of $D/(T_1,T_2)$ is at most $2$, because
$\b_2$ is expressed in terms of $\b_1$ and $T_1$ is quadratic
in $\b_1$.  Whether the rank is exactly $2$ depends on whether  $[1]$ and $[\b_1]$ are independent, and this depends on the nature of $u$.  It can be shown that the
rank is $2$ if and only if $u$ satisfies the condition
$
u_{z_2}=3uu_{z_1} + \tfrac12 u_{z_1z_1z_1},
$
which is the
KdV equation.
\qed
\end{example}

Both of these examples arise in the following way: first, we fix a value of $z_2$, and consider the single variable D-module $D/(T_1)$, whose rank is obvious (the order of $T_1$); then, we attempt to \ll extend\rr to a two variable D-module of the same rank by adding a relation of the form $T_2=\b_2-P$.  

Let us make this into a general procedure. We call the variables $x$ and $t$, as the procedure can be interpreted as producing a $t$-flow of the original D-module in $x$.  Thus, we start with a D-module $D_x/(T)$ of rank $s+1$, where 
\[
T=\bx^{s+1} + a_s(x)\bx^s + \dots + a_1(x)\bx + a_0(x).
\]
We wish to extend this to a D-module $D_{x,t}/(T_1,T_2)$ of rank $s+1$ by extending $T$ to a $t$-family $T_1$ (with $T_1\vert_{t=0}=T$)  and adjoining a further partial differential operator $T_2$.
If we take $T_2$ of the form $T_2=\bt - P$, where $P$ does not involve $\bt$, then it is obvious that 
\[
\rank \ D_{x,t}/(T_1,T_2) \ \le\  s+1
\]
since $T_2$ may be used to eliminate $\bt$.  

\begin{proposition}\label{rank} Let $T_2=\bt-P$. Then
$\rank \ D_{x,t}/(T_1,T_2) = s+1$ if and only if any of the following equivalent conditions hold:

\no(a) $[T_2,(T_1)]\sub (T_1)$

\no(b) $[T_2,T_1] \equiv 0 \mod T_1$

\no(c) $(T_1)_t \equiv[P,T_1]\mod T_1$

\no(where $(T_1)_t$ means the result of differentiating the coefficients of $T_1$ with respect to $t$).
\end{proposition}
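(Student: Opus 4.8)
The plan is to prove the three conditions equivalent and then to show that condition (a) is equivalent to the rank being exactly $s+1$. The logical skeleton is: first dispose of the easy equivalences (a) $\Leftrightarrow$ (b) $\Leftrightarrow$ (c), which are purely a matter of unwinding notation, and then spend the real effort on the statement $\rank\, D_{x,t}/(T_1,T_2) = s+1 \iff$ (a).

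For the equivalence of (a), (b), (c): condition (b) is by definition the statement that $[T_2,T_1]$ lies in the left ideal $(T_1)$, which is the same as saying $[T_2,T_1]=Q T_1$ for some operator $Q$, and this is exactly (a) once one checks that $[T_2,(T_1)]\sub (T_1)$ is equivalent to $[T_2,T_1]\in(T_1)$ (the inclusion for a general element $RT_1$ of the ideal follows from $[T_2,RT_1]=[T_2,R]T_1+R[T_2,T_1]$, the first term already being in $(T_1)$). For (b) $\Leftrightarrow$ (c), I would substitute $T_2=\bt-P$ and expand the commutator: since $\bt$ commutes with $\bx$, the only effect of $\bt$ on $T_1$ is to differentiate its coefficients, giving $[\bt,T_1]=(T_1)_t$, so $[T_2,T_1]=[\bt-P,T_1]=(T_1)_t-[P,T_1]$. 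Reducing mod $T_1$ then turns (b) into (c) directly.

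The substantive part is the rank computation. The key idea is that $[T_2,(T_1)]\sub(T_1)$ is precisely the condition that makes $T_2$ act as a well-defined operator on the quotient $D_x/(T_1)$, equivalently that the $\bt$-action descends to the one-variable D-module $\calM_x=D_x/(T_1)$ (viewing $T_1$ for fixed $t$). Concretely, I would argue as follows. We already know from the remark preceding the statement that $T_2=\bt-P$ can be used to eliminate $\bt$, so every class in $D_{x,t}/(T_1,T_2)$ is represented by an operator involving only $\bx$; hence $[1],[\bx],\dots,[\bx^s]$ span the module and $\rank\le s+1$. The module has rank exactly $s+1$ iff these classes are $\calH_{x,t}$-linearly independent, equivalently iff no nonzero $\calH_{x,t}$-combination of $1,\bx,\dots,\bx^s$ lies in the ideal $(T_1,T_2)$. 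The condition (a) guarantees that reducing by $T_2$ (to kill $\bt$) and then by $T_1$ are compatible, so that the natural surjection $D_x/(T_1)\otimes_{\calH_x}\calH_{x,t}\twoheadrightarrow D_{x,t}/(T_1,T_2)$ is in fact an isomorphism; the left side visibly has rank $s+1$. Conversely, if (a) fails, then $[T_2,T_1]\notin(T_1)$ produces a relation among $1,\dots,\bx^s$ of order $\le s$ that is not already a consequence of $T_1$, forcing the rank to drop below $s+1$.

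The main obstacle will be the converse direction and the bookkeeping in the isomorphism claim: one must verify carefully that condition (a) is exactly what is needed for the $\bt$-action to descend consistently, i.e.\ that imposing $T_2=\bt-P=0$ on top of $T_1=0$ introduces no new $\bx$-relations beyond those already in $(T_1)$. The cleanest way to handle this is to phrase it via the flat-connection picture of section \ref{pde}: write the rank-$(s+1)$ module $D_x/(T_1)$ in matrix form $\bx Y=A Y$, and check that adjoining $\bt-P$ with condition (c) is precisely the zero-curvature (flatness) condition $[\bx+A,\ \bt+B]=0$ for the matrix $B$ representing $P$, which by Theorem \ref{flat} is equivalent to the existence of an $(s+1)$-parameter family of joint solutions --- i.e.\ to the solution space, and hence the module rank, being exactly $s+1$. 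This reduces the hard step to an application of the already-established flatness criterion, and the remaining verification is the routine identification of $(T_1)_t\equiv[P,T_1]$ with that curvature vanishing.
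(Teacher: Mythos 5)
Your proposal is correct and takes essentially the same route as the paper's (sketch) proof: the paper likewise constructs a connection on the trivial bundle $N_{x,t}\times\C^{s+1}$ with respect to the basis $[1],[\bx],\dots,[\bx^s]$, taking $\nabla_{\bx}[\bx^i]=[\bx^{i+1}]$ and $\nabla_{\bt}[\bx^i]=[\bx^iP]$, and reduces the rank statement to the claim that this connection is flat if and only if condition (c) holds. Your preliminary unwinding of (a)$\Leftrightarrow$(b)$\Leftrightarrow$(c) and the observation that the rank is always at most $s+1$ agree with the paper's surrounding discussion, so the proposals match in substance.
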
 

\begin{proof}[Sketch proof]  The proof hinges on the construction of a certain connection in the trivial bundle $N_{x,t}\times \C^{s+1}$.  We shall define the connection form with respect to the local basis $[1],[\bx],\dots,[\bx^s]$.   First of all, the $t$-family of D-modules $D_x/(T_1)$ gives a connection $\nabla_{\bx}$  in the $x$-direction,
namely $\nabla_{\bx} [\bx^i]=[\bx^{i+1}]$.
Next we define $\nabla_{\bt}$ by
$\nabla_{\bt} [\bx^i]=[\bx^{i}P]$ for $0\le i\le s$.  
We claim that the resulting connection $\nabla$ is flat if and only if condition (c) holds.  Details can be found in section 4.4 of \cite{Gu08}.
\end{proof}

The proof suggests a useful computational method: first write down the connection form $\Om=\Om_1 dx + \Om_2 dt$ with respect to $[1],[\bx],\dots,[\bx^s]$, then calculate $d\Om +\Om\wedge\Om$.  Let us apply this to the two examples given earlier.  

For Example \ref{rankone} the connection form is just  $\Om=(-f)dz_1 + (-g)dz_2$.  Since we are dealing with $1\times1$ matrices, we have $\Om\wedge\Om=0$, so the flatness condition is $d\Om=0$, i.e.\ $f_{z_2}=g_{z_1}$.
For Example \ref{KdV}, let us consider a more general relation
$T_2=\bt-P$ where $P=f+g\b_x$ (keeping $T_1=\bx^2+u$).  
To find the connection matrix of $\nabla_{\bx}$, we compute
\begin{align*}
\bx[1]&=[\bx]=0[1]+1[\bx]\\
\bx[\bx]&=[\bx^2]=[-u]=-u[1]+0[\bx],
\end{align*}
so
\[
\Om_1=
\bp
  0  &   -u \\
1 & 0
\ep.
\]
Similarly, from
\begin{align*}
\bt[1]&=[\bt]=[P]=f[1]+g[\bx]\\
\bt[\bx]&=[\bx P]=[\bx(f+g\bx)]=
(f_x-ug)[1]+(f+g_x)[\bx],
\end{align*}
we obtain
\[
\Om_2=
\bp
  f  &  f_x -ug \\
g & f+g_x
\ep.
\]
Now, the zero curvature condition
$d\Om+\Om\wedge\Om=0$ reduces to
\begin{align*}
-u_t&=f_{xx}-u_x g - 2ug_x\\
0&=2f_x+g_{xx},
\end{align*}
hence
\[
u_t=\tfrac12 g_{xxx}  + g u_x + 2g_x u.
\]
To obtain an evolution equation such as the KdV equation it is natural to take $f$ and $g$ to be differential polynomials in $u$ (it suffices to choose $g$, as without loss of generality $f=-\tfrac12 g_x$).  The choice $g=-u$ (giving $P=\tfrac12u_x-u\bx$) produces the KdV equation of Example \ref{KdV}.  
There are many other choices, and the possibilities multiply further when we start with a general operator $T_1$ instead of $T_1=\bx^2+u$.   Thus our construction produces a vast number of examples of \ll integrable p.d.e.\rrr.   

It is necessary to make a remark here about the special role of KdV equation, which is more commonly 
viewed as the Lax eqation
$(T_1)_t=[P,T_1]$, with $P=\bx^3+\frac32 u\bx + \frac34 u_x$.  (In the D-module we have $P\equiv -\tfrac14 u_x+\tfrac12 u \bx$, and the Lax equation implies that
$(T_1)_t\equiv [-\tfrac14 u_x+\tfrac12 u \bx,T_1]$, so we obtain the same KdV equation from this $P$.)
The condition $[\bt-P,T_1]\equiv 0$ mod $T_1$ can be regarded as the intrinsic scalar version of the matrix zero curvature condition
$d\Om+\Om\wedge\Om=0$, but of course it is weaker than the condition
$[\bt-P,T_1]= 0$, in general.  The KdV equation is very special, as in this case the scalar version {\em can} be written in the form 
$[\bt-P,T_1]= 0$.  

The proof of Proposition \ref{rank} easily generalizes in one direction (see section 4.4 of \cite{Gu08}):

\begin{proposition}
Let $T_i$ be a $t$-family of differential operators in the variables  $z_1,\dots,z_r$ such that the D-module 
$\calM=D_{z_1,\dots,z_r}/(T_1,\dots,T_u)$ has rank $s+1$ for each value of $t$.  Let $P$ be a $t$-family of differential operators in $z_1,\dots,z_r$ such that
$[\b_t-P,\,I\,]\sub I$.
Then the extended D-module 
$D_{z_1,\dots,z_r,t}/(T_1,\dots,T_u,\b_t-P)$
also has  rank $s+1$.
\end{proposition}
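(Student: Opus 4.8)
The plan is to imitate the proof of Proposition \ref{rank}, replacing the monomial frame $[\bx^i]$ by an arbitrary $\calH$-basis $[P_0],\dots,[P_s]$ of $\calM$ (available for each fixed $t$, since $\calM$ has rank $s+1$), and to recognise the hypothesis $[\b_t-P,I]\sub I$, where $I=(T_1,\dots,T_u)$, as precisely the flatness condition for the extended connection. Throughout I would write $D_z=D_{z_1,\dots,z_r}$, denote by $\calM'=D_{z_1,\dots,z_r,t}/(T_1,\dots,T_u,\b_t-P)$ the extended module, and let $\Om_1,\dots,\Om_r$ be the connection matrices of $\na_{\b_1},\dots,\na_{\b_r}$ determined by $\calM$ in the frame $[P_0],\dots,[P_s]$.

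First I would dispose of the easy inequality $\rank\,\calM'\le s+1$. Since $\b_t-P$ lies in the enlarged ideal, the relation $\b_t\equiv P$ holds modulo that ideal; using the commutation $\b_t X=X\b_t+X_t$ to push $\b_t$ past any $z$-operator $X$ (where $X_t$ denotes differentiation of coefficients), every power of $\b_t$ can be eliminated inductively, so that an arbitrary element reduces to a $z$-operator with $t$-dependent coefficients, and hence, modulo $(T_1,\dots,T_u)$, to an $\calH$-combination of $[P_0],\dots,[P_s]$. Thus these $s+1$ elements generate $\calM'$ over $\calH_{z_1,\dots,z_r,t}$.

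For the reverse inequality I would manufacture a connection $\na=d+\Om$ on the trivial bundle $N\times\C^{s+1}$ over a polydisk $N\sub\C^{r+1}$ with coordinates $z_1,\dots,z_r,t$: in the $z_i$-directions $\na$ is the connection already carried by $\calM$ at each fixed $t$, while in the $t$-direction I set $\na_{\b_t}[P_j]=[\b_t P_j]$, reduced by $\b_t\equiv P$ into the span of the frame (legitimate by the elimination just performed). The curvatures $[\na_{\b_i},\na_{\b_j}]$ vanish because, for each fixed $t$, $\calM$ is a rank-$(s+1)$ $D_z$-module, so the flatness statement following Theorem \ref{flat} applies verbatim. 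The mixed curvatures $[\na_{\b_i},\na_{\b_t}]$ are where the hypothesis does its work: writing $[\b_t-P,X]=X_t-[P,X]$ for $X\in D_z$, the assumption $[\b_t-P,I]\sub I$ (the direct generalization of condition (a), equivalently (c), of Proposition \ref{rank}) says $[\b_t-P,T_k]=\sum_l B_{kl}T_l\in I$ for each $k$, and this is exactly what makes the $t$-direction of $\na$ compatible with the $z$-directions. Once $\na$ is flat and $N$ simply connected, Theorem \ref{flat}(4) furnishes a fundamental solution matrix $H:N\to\glsc$, equivalently an $(s+1)$-dimensional solution space of the associated matrix system, which under the matrix form of Proposition \ref{hom} is an $(s+1)$-dimensional $\Hom_D(\calM',\calH)$; invertibility of $H$ shows that no nontrivial relation $\sum_j f_j[P_j]=0$ with $f_j\in\calH$ can hold, so the frame is independent and $\rank\,\calM'=s+1$.

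I expect the main obstacle to be the curvature bookkeeping in the mixed directions: verifying that the operator identity $[\b_t-P,T_k]\in I$ really does annihilate the $(i,t)$-curvature of $\na$, now that the commutator need only be absorbed by an arbitrary left-combination $\sum_l B_{kl}T_l$ of the generators rather than by a multiple of a single relation. This is the content of the remark that the proof ``generalizes in one direction'': the implication hypothesis $\Rightarrow\rank=s+1$ goes through once this compatibility is checked — most transparently by noting that, for a joint solution $y$, one computes $\b_t(T_k y)=P(T_k y)+\sum_l B_{kl}(T_l y)$, a homogeneous linear evolution in the quantities $T_k y$ with vanishing initial data, so that the constraints $T_k y=0$ persist in $t$ and an initial $(s+1)$-dimensional family of solutions of $\{T_k y=0\}$ extends to one for $\calM'$ — whereas the converse does not, since $\calH$-independence of $[P_0],\dots,[P_s]$ no longer reduces to a single scalar equation.
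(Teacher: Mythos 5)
Your strategy is the same as the paper's: transplant the proof of Proposition \ref{rank}, building the extended connection from a frame of $\calM$ and reading the hypothesis $[\bt-P,\,I\,]\sub I$ as flatness in the mixed directions; the spanning half of your argument (eliminating $\bt$ by pushing it to the right and using $X\bt\equiv XP$) is correct. The genuine gap is in the independence half. You invoke ``the matrix form of Proposition \ref{hom}'' to identify the $(s+1)$-dimensional space of parallel sections furnished by Theorem \ref{flat}(4) with $\Hom_D(\calM',\calH)$, and to know that the columns of $H$ have the form $(P_0y,\dots,P_sy)^t$ for scalar solutions $y$ of the extended system. But in the paper that identification is established only under the standing Assumption that the D-module in question is free of rank $s+1$ --- which for $\calM'$ is exactly what you are trying to prove, so as written the step is circular. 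Invertibility of $H$ by itself does not exclude a relation $\sum_j f_j[P_j]=0$ in $\calM'$: for that you need each column of $H$ to arise from an element of $\Hom_D(\calM',\calH)$, i.e.\ from a common solution of the full scalar system, and that surjectivity direction of the scalar/matrix correspondence is not free.

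The repair (in effect what section 4.4 of \cite{Gu08} does) avoids solution spaces altogether. Once flatness makes $(\calH^{s+1},\na)$ a module over $D_{z_1,\dots,z_r,t}$, consider the left-module map $\phi:D_{z_1,\dots,z_r,t}\to\calH^{s+1}$, $Q\mapsto Q\cdot v$, where $v$ is the coordinate vector of $[1]\in\calM$ in the frame. Since the $\na$-action on coordinate vectors replicates the $D_z$-action on $\calM$ at each fixed $t$, one has $P_j\cdot v=e_j$, $T_k\cdot v=0$, and, by the very definition of $\na_{\bt}$, $(\bt-P)\cdot v=0$. Hence $\phi$ is surjective and its kernel contains the extended ideal $I'=(T_1,\dots,T_u,\bt-P)$; conversely, if $Q\cdot v=0$, your elimination gives $Q\equiv\sum_jf_jP_j$ mod $I'$, whence $0=\sum_jf_je_j$, so all $f_j=0$ and $Q\in I'$. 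Thus $\calM'\cong\calH^{s+1}$ is free of rank $s+1$, and both the independence of the frame and the dimension of the solution space become corollaries rather than inputs. Finally, your ``more transparent'' persistence argument proves only uniqueness, not existence: producing $y(z,t)$ with $(\bt-P)y=0$ and prescribed $y\vert_{t=0}$ is a nontrivial existence problem when $P$ has order $\ge2$ in the $z$-variables (the formal $t$-series is determined but need not converge --- the heat-equation phenomenon), and it is precisely the flat connection, being a first-order integrable system, that supplies this existence; so that remark cannot substitute for the connection argument.
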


\no This can be used inductively to construct \ll hierarchies\rr of integrable p.d.e., including the well known KdV hierarchy. 

Our extension procedure appears to produce very special D-modules, but it is in fact rather general.  Namely, in a  \ll generic\rr  D-module of rank $s+1$ of the form 
$D_{x,t}/I$, the elements
\[
[1],\ \b_x[1],\ \b_x^2[1],\ \dots,\ 
\b_x^s[1]
\]
will be independent.  They necessarily satisfy a relation of the form $T=\b_x^{s+1} + a_s \b_x^s + \dots + a_0$, i.e.\ 
$T [1]=0$.  The element $[\b_t]$ can be expressed as a 
linear combination of the above basis vectors,  that is, $(\b_t-P) [1]=0$ for some polynomial $P$ in $\b_x$.  Hence the D-module is of the type constructed in this section.

We conclude with a brief comment on the \ll spectral parameter\rrr. It is easy to write down a connection matrix $\Om$ with a sprinkling of $\la$'s, then obtain an \ll integrable p.d.e.\ with spectral parameter\rr $d\Om+\Om\wedge\Om=0$, but, just as when $\la$ is absent, it is not easy to produce nontrivial examples. However, such a parameter appears naturally  in many integrable systems.  
For example, the Lax form of the KdV equation is often written as $[\bt-P,T_1-\la]=0$, rather than $[\bt-P,T_1]=0$. These are equivalent, but the parameter $\la$ 
(eigenfunction of the Schr\"odinger operator $T_1$) plays an important role in describing the {\em solutions} of the KdV equation.   For the quantum differential equations we have a natural parameter $\la=\h$ from the start.  In such cases, the D-module treatment can be modified by incorporating the spectral parameter into the ring of differential operators, although some care is needed as the nature of the $\la$-dependence of the operators plays a crucial role. 

\section{Applications}\label{applications}

The main justification for the D-module language of sections \ref{pde},\ref{extensions} is that it provides a unified approach to various kinds of integrable systems with quite different geometrical interpretations.  Superficially the geometry arises from flat connections, but there are deeper undercurrents flowing between differential geometry, symplectic geometry and algebraic geometry which produce these connections.

In the case of quantum cohomology, we have already seen 
(Theorem \ref{basis}, Example \ref{hyper3}) how Gromov-Witten invariants are packaged efficiently by the quantum D-module.  It is natural to expect that properties of quantum cohomology will correspond to properties of D-modules. We shall give several examples in this direction, all of which make contact with current research.  

It is also natural to expect benefits from thinking of quantum cohomology in terms of integrable systems, and, conversely,  developing a theory of integrable systems which resemble quantum cohomology in some way.  Two key examples are the direct relation between \ll higher genus\rr quantum cohomology and the KdV hierarchy discovered by E.~Witten and M.~Kontsevich, and the classification of certain integrable systems developed by B.~Dubrovin and Y.~Zhang.  We do not discuss these here, as they primarily involve infinite hierarchies and D-modules of infinite rank. 

\subsection{The WDVV equation and reconstruction of big quantum cohomology}

It is time to address the question \ll Of which integrable system is the quantum cohomology (of a given space) a solution?\rrr.
There are two main candidates, and each of them involves a considerable digression.  

The first candidate is the WDVV equation.  This applies to \ll big quantum cohomology\rr rather than the \ll small quantum cohomology\rr that we have seen so far, but the former may be \ll reconstructed\rr from the latter and this is where the D-module extension procedure of section \ref{extensions} is relevant.

Let us briefly give the definition of big quantum cohomology and the WDVV equation.  First, the Gromov-Witten potential of a manifold $M$ is the generating function 
\[
\F^M(t)=\sum_{l\ge3,D} \tfrac1{l!} \lan
\underbrace{T\vert \dots\vert T}_l
\ran\subD
\]
for the Gromov-Witten invariants. (We assume that this function converges for $t$ in some open subset of $H^\ast M$; i.e.\ for
$T$ in some open subset of $H_\ast M$.)  It follows from this and the definition of the small quantum product that
\[
\b_i\b_j\b_k \F^M\,
\vert_{H^2M}\ (t)=
(b_i\circ_t b_j, b_k).
\]
It is natural to define a new product, called the big quantum product, as follows:

\begin{definition}  For any $t\in H^\ast M$ such that $\F^M(t)$
converges,  we define 
$\circ_t$ on $H^\ast M$ by
$( b_i\circ_t b_j, b_k)=\b_i\b_j\b_k \F^M(t)$ for all $i,j,k\in\{0,\dots,s\}$. 
\end{definition}

It can be proved (see section 8.2 of \cite{CoKa99}) that this big quantum product is commutative, associative, and has the same identity element $1$ as the small quantum product. The most difficult part of this is the associativity.

For {\em any} (smooth or analytic) $\C$-valued function
$\F$ on (an open subset of) $H^\ast M$, we can define a product 
operation $\ast_t$  in the same way:
\[
(b_i\ast_t b_j, b_k)=\b_i\b_j\b_k \F(t)
\ \overset{\text{def}}{=}\ 
\F_{ijk}(t).
\]
Whether this product is associative is a nontrivial condition on $\F$.  Commutativity is obvious.

\begin{definition}
The WDVV equation is the system of third order nonlinear partial differential equations for $\F$ given by the associativity conditions $( b_i\ast_t b_j)\ast_t b_k =
b_i\ast_t (b_j \ast_t b_k)$.
\end{definition}

\no In general, solutions of the WDVV equation correspond to \ll  Frobenius manifolds\rrr, a generalization of quantum cohomology (see section 8.4 of \cite{CoKa99} and the references there).

Let us see how this leads to an integrable p.d.e.\ which admits the big quantum cohomology of $\C P^2$ as a distinguished solution.  Then we shall return to the matter of reconstructing the big quantum cohomology from the small quantum cohomology. This famous example is taken from
\cite{KoMa94}. 

We consider the product operation defined in the above way by a function $\F$ on the three-dimensional complex vector space  $H^\ast \C P^2 = \C 1 \oplus \C b \oplus \C b^2$.  
We assume that $1$ is the identity element; commutativity is automatic.
It follows that $b\ast_t 1=b=1\ast_t b$,  
$b\ast_t b=b^2 + \F_{111} b + \F_{112}1$, and
$b\ast_t b^2=b^2\ast_t b=\F_{121} b + \F_{222}1$,
$b^2\ast_t b^2=\F_{221} b + \F_{222}1$.  
There is just one nontrivial associativity condition in this example,  namely 
$( b\ast_t  b)\ast_t  b^2 =
 b\ast_t ( b\ast_t  b^2)$.
In terms of $\F$ this condition is  $\F_{222}+\F_{111}\F_{122}=\F_{112}^2$, which is by definition the WDVV equation.

Now, it turns out that the associativity condition is equivalent to the flatness of the connection $d+\tfrac1\h\om$  (this connection is defined in the same way as for small quantum cohomology).  
From the above products, we see that the connection form is given explicitly by
\[
\om=
\bp
 1  &  0  &  0 \\
0 & 1& 0\\
0 & 0 & 1
\ep
dt_0
+
\bp
  0  &  \F_{112}  &  \F_{122}\\
1 & \F_{111}& \F_{121}\\
0 & 1 & 0
\ep
dt_1
+
\bp
  0  &  \F_{212}  &  \F_{222}\\
0 & \F_{211}& \F_{221}\\
1 & 0 & 0
\ep
dt_2.
\]
This exhibits the WDVV equation as an integrable p.d.e.\ with spectral parameter $\h$ (cf.\ the end of section \ref{extensions}) for the function $\F$.  The particular solution given by the Gromov-Witten potential of $\C P^2$ turns out to be
\[
\F^{\C P^2}(t_0,t_1,t_2)= 
\tfrac12(t_0t_1^2 + t_0^2t_2) + \sum_{d\ge 1} N_d\, e^{dt_1} 
\tfrac{\vphantom{{t_2}_{A_A}}t_2^{3d-1}}{(3d-1)!}
\]
where the $N_d$
are determined recursively by $N_1=1$ and
\[
N_d=\sum_{i+j=d}
\left(
\tbinom{3d-4}{3i-2}i^2j^2 - i^3j\tbinom{3d-4}{3i-1}
\right)
N_iN_j.
\]
The positive integer $N_d$ can be interpreted as
the number of rational curves
of degree $d$ in $\C P^2$ which hit $3d-1$ generic points.
As a function of $t_0, q_1=e^{t_1}, t_2$ the series for $\F^{\C P^2}$
converges in a neighbourhood of the point $(t_0,q_1,t_2)=(0,0,0)$ (see
section 2 of \cite{DiIt95}).  

The Reconstruction Theorem of \cite{KoMa94} says that all of this highly nontrivial information may be \ll reconstructed\rr from the (much simpler) small quantum cohomology of $\C P^2$.  More precisely, any flat connection of the form
\[
\om=
\bp
 1 & 0 & 0\\
0 & 1& 0\\
0 & 0 & 1
\ep
dt_0
+
\bp
0  &  \ast  &  \ast\\
1 & \ast& \ast\\
0 & 1 & 0
\ep
dt_1
+
\bp
  0  &  \ast  &  \ast\\
0 & \ast& \ast\\
1 & 0 & 0
\ep
dt_2
\]
which satisfies the
\ll initial condition\rr 
\[
\om\vert_{t_0=t_2=0}\  =\ 
\bp
  1  &  0  &  0 \\
0 & 1& 0\\
0 & 0 & 1
\ep
dt_0
+
\bp
  0  &  0  &  e^{t_1}\\
1 & 0& 0\\
0 & 1 & 0
\ep
dt_1
+
\bp
  0  &  e^{t_1} &  0\\
0 & 0& e^{t_1} \\
1 & 0 & 0
\ep
dt_2
\]
must be of the previous form for some $\calF$, and, furthermore, $\calF$ is essentially unique.  
An elementary discussion of this can be found in \cite{DiIt95}.  More sophisticated and more general versions of this argument have been given, starting with \cite{HeMa04}. 

In terms of our extension procedure, this example can be formulated as follows.   If the D-module basis giving rise to the connection $d+\tfrac1{\h}\om$ is $[P_0]=[1],[P_1],[P_2]$ then the component $\tfrac1{\h}\om_1dt_1$ shows that
$P_1\equiv \h\b_1$ and $P_2\equiv (\h\b_1)^2 -\calF_{111}\h\b_1-\calF_{112}$.  Computing $\h\b_1[P_2]$ gives a third order relation
\[
T_1=
(\h\b_1)^3 - \calF_{111} (\h\b_1)^2 - 
(2\calF_{112}+\h\calF_{1111})  \h\b_1  - 
(\calF_{122}+\h \calF_{1112}).
\]
Similarly, the component $\tfrac1{\h}\om_2dt_2$ gives 
$\h\b_2[P_0]=[P_2]=[(\h\b_1)^2 -\calF_{111}\h\b_1-\calF_{112}]$, hence 
\[
T_2=\h\b_2 - P \text{\ \ where\ \ }
P=
(\h\b_1)^2 -\calF_{111}\h\b_1-\calF_{112}
\]
is also a relation.   These two relations generate the ideal of relations of the D-module.  This is an example of the situation of Proposition \ref{rank}.

\subsection{Crepant resolutions}

In \cite{CITXX},  two examples were given to illustrate a general principle known as the \ll Crepant Resolution Conjecture\rrr.  The simpler of the two relates the quantum cohomology of the Hirzebruch surface 
$\FF_2=\P( \calO(0)\oplus\calO(-2) )$ (a $\C P^1$-bundle over $\C P^1$; the fibrewise one point compactification of $T\C P^1$) to the quantum cohomology of the weighted projective space $\P(1,1,2)$ (the one point compactification of $T\C P^1$).  The natural map $\FF_2\to \P(1,1,2)$ is biholomorphic away from the singular point $[0,0,1]$ of $\P(1,1,2)$.  It is a crepant resolution, and the Crepant Resolution Conjecture predicts that the (orbifold) quantum cohomology of $\P(1,1,2)$ can be obtained by specializing the quantum parameters $q_1,q_2$ of $\FF_2$ to certain values. Coates et al confirm the conjecture in this case by comparing the D-modules of each space, and carefully matching up their $J$-functions after analytic continuation.    
Such examples are valuable as a guide to finding the most appropriate formulation of the  Crepant Resolution Conjecture, and more generally to understanding the functorial properties of quantum cohomology under birational maps.

We shall explain this example very simply using the method of section \ref{extensions}.  As this does not involve direct geometric arguments,  it suggests the possibility of a purely D-module theoretic formulation of the conjecture.  

First, we state the quantum D-modules of each space, which are well known.  Since $\FF_2$ is a $\C P^1$-bundle over $\C P^1$, $H^2 \FF_2$ has two additive generators, which we call $b_1,b_2$.  Geometrically their Poincar\'e duals may be represented by a fibre and the infinity section of the bundle, respectively. With respect to this basis, it can be shown (section 5 of \cite{Gu05} or chapter 11 of \cite{CoKa99}) that 
\[
\calM^{\FF_2}=
{D^\h_{t_1,t_2}}/{(F_1,F_2)}
\]
where
$F_1=(\h\b_1)^2-q_1q_2$,
$F_2=\h\b_2(\h\b_2-2\h\b_1) - q_2(1-q_1)$.  This is a \ll quantization\rr of $QH^\ast\FF_2=\C[b_1,b_2,q_1,q_2]/(b_1^2-q_1q_2,b_2(b_2-2b_1)- q_2(1-q_1))$.

For $\P(1,1,2)$, the (orbifold) quantum cohomology D-module was calculated in \cite{CCLTXX}.  The (orbifold) cohomology group  $H^2_{\orbi}\P(1,1,2)$ contains an obvious \ll hyperplane class\rr $b$.  With respect to this, one has
\[
\calM^{\P(1,1,2)}=
D^\h_t/(P)
\]
where $P=(\h\b)^4 - \tfrac12\h(\h\b)^3 - \tfrac14 q$.  

Now, $H^2_{\orbi}\P(1,1,2)$ has rank two; it has another additive generator called $\one$, which arises from the orbifold structure at the singular point $[0,0,1]$.  The definitions of 
orbifold cohomology and orbifold quantum cohomology are substantial generalizations of the non-orbifold case, and we shall not discuss them here.  However, the available evidence suggests that the orbifold quantum differential equations behave in a similar way to those in the non-orbifold case. In particular, the orbifold Gromov-Witten invariants of weighted projective space may be extracted by the method of section \ref{qde} --- see
\cite{GuSaXX}.   The canonical\footnote{\ll Canonical basis\rr means a basis constructed from a monomial basis by the canonical procedure of Theorem \ref{basis}.}
bases of the quantum D-modules  $\calM^{\FF_2}$, $\calM^{\P(1,1,2)}$, with their corresponding cohomology bases, are as follows:
\[
\begin{array}{c|lcc|l}
1 & 1 & \hphantom{aaaa} & 1 & 1
\\
b_1 & \h\b_1 & & b & \h\b
\\
b_2 & \h\b_2 & & \ b^2 & (\h\b)^2
\\
b_1b_2 & \h\b_1\h\b_2-q_1q_2 & & \ \one & 2q^{-1/2}(\h\b)^3
\end{array}
\]

\begin{theorem}{\cite{CITXX}} The orbifold quantum D-module
$\calM^{\P(1,1,2)}$  is obtained from the quantum D-module $\calM^{\FF_2}$
by setting $(q_1,q_2)=(-1,\i q^{1/2})$. This is a natural identification in which the basis $1,b_1,b_2, b_1b_2$ of $H^2\FF_2$ corresponds to the basis $1,b-\i\one,2b,2b^2$ of $H^2\P(1,1,2)$.
\end{theorem}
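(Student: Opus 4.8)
The plan is to realise $\calM^{\P(1,1,2)}$ as the restriction of $\calM^{\FF_2}$ to the curve $\ga:\ q_1=-1,\ q_2=\i q^{1/2}$, and then to match the two presentations directly, in the spirit of the extension procedure of Section~\ref{extensions}. Writing $q_i=e^{t_i}$, $q=e^{t}$, the substitution $q_2=\i q^{1/2}$ is equivalent to $q=-q_2^2$ with $t_1=\i\pi$ constant and $t_2=\tfrac{\i\pi}2+\tfrac t2$; hence the bare derivations satisfy $\b_2=2\b$ and $\b_1 q^{1/2}=0$. First I would pull the Dubrovin/Givental connection back along $\ga$: since $dt_1/dt=0$ and $dt_2/dt=\tfrac12$, the connection in the single variable $t$ is $\h\b+\tfrac12\,\om_2|_\ga$, where $\om_i$ denotes the matrix of quantum multiplication by $b_i$ in the canonical basis $1,b_1,b_2,b_1b_2$; the transverse matrix $\om_1|_\ga$ plays the role of multiplication by $b_1$.

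Second, I would establish the D-module isomorphism. Taking $[1]$ as a candidate cyclic vector for $\h\b$, I would compute $(\h\b)^k[1]$, $k\le 4$, from $F_1$, $F_2$ and $\b_2=2\b$, using the specialised coefficients $q_1q_2=-\i q^{1/2}$ and $q_2(1-q_1)=2\i q^{1/2}$. The delicate point is that the relations may be invoked only as membership in the \emph{left} ideal $(F_1,F_2)$: each time $\h\b$ is commuted past a coefficient it produces an extra power of $\h$ (for instance $\h\b\circ q^{1/2}=\tfrac\h2 q^{1/2}+q^{1/2}\h\b$), and dropping these corrections gives the wrong lower-order terms. Done correctly, $[1]$ is cyclic and satisfies an order-four relation agreeing with
\[
P=(\h\b)^4-\tfrac12\h(\h\b)^3-\tfrac14 q
\]
up to a single lower-order term, the latter removed by a gauge transformation (a change of cyclic vector). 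Equivalently, one exhibits $g=g(q,\h)$ with $g\bigl(\h\b+\tfrac12\om_2|_\ga\bigr)g^{-1}=\h\b+B$, where $B$ is multiplication by $b$ in $\calM^{\P(1,1,2)}$; this identifies the restricted module with $D^\h_t/(P)=\calM^{\P(1,1,2)}$.

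Third, for the basis correspondence I would pass to the classical limit $q=0$ --- which on the $\FF_2$ side is the point $(q_1,q_2)=(-1,0)$, not the origin --- and check that the constant matrix $S$ sending $1,b_1,b_2,b_1b_2$ to $1,b-\i\one,2b,2b^2$ intertwines the leading connection matrices, $S\bigl(\tfrac12\om_2|_\ga\bigr)\big|_{q=0}=B|_{q=0}\,S$ (and likewise for $\om_1$), equivalently that $S$ is an isomorphism of the two $q=0$ algebras. This reduces to the elementary orbifold products $b\cup\one=0$ and $\one\cup\one=b^2$, and identifies $S$ as the leading term $g|_{q=0}$ of the gauge above.

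The main obstacle is twofold. Computationally it is precisely the $\h$-bookkeeping noted above, which is what makes a naive specialisation of the relation produce a spurious lower-order term. Conceptually it is that the identification is not holomorphic across $q=0$: the half-integer powers $q^{1/2}$ (encoding the twisted-sector class $\one$) and the fact that the two natural cyclic vectors differ by a nontrivial gauge force the matching to be carried out by analytic continuation, exactly the $J$-function comparison performed by Coates et al.
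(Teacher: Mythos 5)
Your overall strategy --- restricting $\calM^{\FF_2}$ along the curve $(q_1,q_2)=(-1,\i q^{1/2})$ and interpreting this through the extension picture of section \ref{extensions} --- is the same as the paper's, but your execution breaks down at the central step. The paper first eliminates $\h\b_1$ inside the \emph{two-variable} module: using $F_1,F_2$ it derives the reductions (a),(b),(c), producing a fourth-order operator $T_1$ in $\h\b_2$ alone together with $T_2=\h\b_1-P$, so that $\calM^{\FF_2}=D^\h_{t_1,t_2}/(T_1,T_2)$; only \emph{then} does it specialize. Because $T_1$ contains no $\b_1$ and $q_1$ enters it only as a coefficient (constant along the curve), while $\b_2$ restricts exactly to $2\b$, the specialization is exact: $T_1$ restricts to $16\left((\h\b)^4-\tfrac12\h(\h\b)^3-\tfrac14 q\right)$, with the \emph{same} cyclic vector $[1]$. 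There is no residual lower-order term and no gauge transformation or change of cyclic vector; your claim that one is needed is incorrect, and adopting it would contradict the very statement you are proving, since the asserted correspondence of canonical bases sends $1$ to $1$ exactly. Your worry about $\h$-bookkeeping is legitimate but misdiagnosed: commutators along the curve, like your $\h\b\circ q^{1/2}$, restrict correctly; the dangerous ones are the transverse $\b_1$-commutators (e.g.\ $\b_1(q_1q_2)=q_1q_2\ne 0$, even though $q_1q_2$ restricts to $-\i q^{1/2}$, which looks $t_1$-independent), and these simply cannot be computed after specialization. That is exactly why the elimination of $\b_1$ must precede the substitution; once it does, the spurious terms you anticipate never arise.

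The second gap is the basis correspondence, which cannot be obtained at $q=0$. The theorem identifies canonical D-module bases for all $q$: one reads off from formula (b) that $\h\b_1$ restricts to $\h\b-2\i q^{-1/2}(\h\b)^3$, from the substitution that $\h\b_2$ restricts to $2\h\b$, and from $F_2$ that $\h\b_1\h\b_2-q_1q_2$ restricts to $2(\h\b)^2$; comparing with the table of canonical bases then gives $b_1\mapsto b-\i\one$, $b_2\mapsto 2b$, $b_1b_2\mapsto 2b^2$. The class $\one$ corresponds to $2q^{-1/2}(\h\b)^3$, which is singular at $q=0$, so your proposed check that $S$ intertwines the $q=0$ connection matrices (equivalently, is an isomorphism of the classical orbifold algebras) is only a consistency check: it cannot produce the $-\i\one$ entry, and tying $S$ to the leading term of a gauge that does not exist makes the argument circular. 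Note finally that the whole point of the paper's derivation is that it is purely algebraic --- no analytic continuation or $J$-function matching is required (that was the original argument of \cite{CITXX}, which this computation is designed to bypass) --- so your closing appeal to the Coates et al.\ comparison concedes precisely the step your proof was supposed to supply.
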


We can derive this very easily (with hindsight) by expressing the D-module in the form given at the end of section \ref{extensions}.  We begin by computing expressions for the powers of $\h\b_2$ by differentiating the relations $F_1,F_2$:
\begin{align*}
(a)\  (\h\b_2)^2&\equiv 2\h\b_1\h\b_2 + q_2(1-q_1)
\\
(b)\  (\h\b_2)^3&\equiv (3q_1q_2+q_2)\h\b_2+2q_2(1-q_1)\h\b_1+\h q_2(1+q_1)
\\
(c)\  (\h\b_2)^4&\equiv 2q_2(1+q_1)(\h\b_2)^2 + \h(\h\b_2)^3+
\h q_2(1+q_1)\h\b_2 - q_2^2(1-q_1)^2.
\end{align*}
From (a) and (b) we see that $[1],[\h\b_2],[(\h\b_2)^2],[(\h\b_2)^3]$ are linearly independent; they form a basis of $\calM^{\FF_2}$.  The fourth order relation
\[
T_1= (\h\b_2)^4 - 2q_2(1+q_1)(\h\b_2)^2 - \h(\h\b_2)^3 -
\h q_2(1+q_1)\h\b_2 + q_2^2(1-q_1)^2
\]
is given by (c).   From (b) we obtain
\[
\h\b_1\equiv \frac{1}{2q_2(1-q_1)}
\left(
(\h\b_2)^3 - (3q_1q_2+q_2)\h\b_2 - \h q_2(1+q_1)
\right) = P \ \text{(say)}.
\]
This gives a relation $T_2=\h\b_1-P$.  For dimensional reasons we must have 
\[
\calM^{\FF_2}=
{D^\h_{t_1,t_2}}/{(F_1,F_2)}=
{D^\h_{t_1,t_2}}/{(T_1,T_2)}.
\]
Let us now put $(q_1,q_2)=(-1,\i q^{1/2})$.  From $q_2=\i q^{1/2}$, we see that the operator 
$\b_2$ restricts to $2\b$, so the operator $T_1$ restricts to
\[
16\left(
(\h\b)^4 - \tfrac12\h (\h\b)^3 - \tfrac14 q
\right),
\]
which is the quantum differential operator of $\calM^{\P(1,1,2)}$, up to a scalar multiple.
Thus, we have exhibited $\calM^{\FF_2}$ as a $t_1$-extension of $\calM^{\P(1,1,2)}$, in the manner of section \ref{extensions}. 

It remains to extract the relation between the canonical bases from this description.  Under the specialization of variables, we have already seen that $\h\b_2$ restricts to $2\h\b$.  Next, formula (b) shows that the operator $\h\b_1$ restricts to 
\[
\frac{1}{4\i q^{1/2}}\left(
(2\h\b)^3 + 4\i q^{1/2} \h\b
\right)=
-2\i q^{-1/2}(\h\b)^3 + \h\b.
\]
Finally, since $(\h\b_2)^2$ restricts to $4(\h\b)^2$, we see from relation $F_2$ that $\h\b_1\h\b_2$ restricts
to $2(\h\b)^2 -\i  q^{1/2}$, hence $\h\b_1\h\b_2-q_1q_2$ restricts to $2(\h\b)^2$. 
From the table above, we read off that the basis elements $1,b_1,b_2, b_1b_2$ correspond to $1,b-\i\one,2b,2b^2$,
as required.   

The correspondence between the bases may be justified geometrically, by examining the map $\FF_2\to \P(1,1,2)$, but it is remarkable that the quantum D-module contains this information implicitly --- along with all the Gromov-Witten invariants of both spaces.  For a recent update on the conjecture we refer to \cite{IrYY}.

\subsection{Harmonic maps and mirror symmetry}

The second candidate for an integrable system whose solutions include quantum cohomology is the harmonic (or pluriharmonic) map equation. Small quantum cohomology is sufficient for this, but, as in the case of the WDVV equation, an entirely new direction --- this time towards mirror symmetry --- is required.
Further details of the discussion below may be found in chapter 10 of \cite{Gu08}.

\no{\em The harmonic map equation.}

The equation for a harmonic map $\phi:\R^2=\C\to G$, where $G$ is a (compact or noncompact) Lie group, is
 \[
\bx(\phi^{-1}\bx \phi) + \by(\phi^{-1}\by \phi)=0.
\]
Writing $z=x+i y$ and
$\b = {\b}/{\b z}=\tfrac12({\b}/{\b x} - i {\b}/{\b y}),$
$\db = {\b}/{\b \bar{z}}=\tfrac12({\b}/{\b x} +i {\b}/{\b y}),$
the  equation becomes
\[
\b(\phi^{-1}\db \phi) + \db(\phi^{-1}\b \phi)=0.
\]
This notation assumes that $G$ is a matrix group.  If
$\GC$ is the complexification of $G$, and $C:\GC\to\GC$ is the natural 
conjugation\footnote{If $G=\U_n$, then $\GC=\glnc$, and 
$C:\GC\to\GC$, $c:\gc\to\gc$ are given respectively by
$C(A)={A^\ast}^{-1}$, $c(A)=-A^\ast$.}  map, and $c:\gc\to\gc$ is the induced conjugation map of Lie algebras, then  $\phi^{-1}\db \phi,\phi^{-1}\b \phi$ take values in $\gc$ and satisfy $c(\phi^{-1}\b \phi)=\phi^{-1}\db \phi$.

The harmonic map equation can be represented as an integrable p.d.e.\  with spectral parameter if we introduce 
the $\gc$-valued $1$-form
\[
\al=
\tfrac12(1-\tfrac{1}{\la}) (\phi^{-1}\b_1 \phi) dz_1 +
\tfrac12(1-{\la}) (\phi^{-1}\b_2 \phi) dz_2
\]
where $\la$ is a complex parameter. Namely, the connection $d+\al$ is flat 
for every (nonzero) value of $\la$ if and only if $\phi$ satisfies the harmonic map equation.  In fact, it is well known and easy to prove (see sections 4.3 and 7.3 of \cite{Gu08}) that the following more general statement holds:

\begin{proposition} Let 
$
\al=\tfrac12(1-\tfrac{1}{\la})\al_1dz_1+
\tfrac12(1-{\la})\al_2dz_2
$
be a $\gc$-valued $1$-form on $\C^2$ (or a simply connected open subset of $\C^2$).  If $d+\al$ is flat for every (nonzero) value of $\la$, then there exists a map $\phi:\C^2\to \GC$ such that
$\al_1=\phi^{-1}\b_1 \phi,\quad
\al_2=\phi^{-1}\b_2 \phi$
and this map satisfies the equation
\[
\b_1(\phi^{-1}\b_2 \phi) + \b_2(\phi^{-1}\b_1 \phi)=0.
\]
Conversely, let $\phi:\C^2\to \GC$ be a map which satisfies the equation
$\b_1(\phi^{-1}\b_2 \phi) + \b_2(\phi^{-1}\b_1 \phi)=0$. Then the $1$-form $\al=
\tfrac12(1-\tfrac{1}{\la}) (\phi^{-1}\b_1 \phi) dz_1 +
\tfrac12(1-{\la}) (\phi^{-1}\b_2 \phi) dz_2$
defines a flat connection $d+\al$.

This remains true when the reality conditions
$z_1=z, z_2=\bar z, \ \ c(\al_1)=\al_2$ are imposed,
giving the harmonic map equation (on $\C$, or a simply connected open 
subset of $\C$).
\end{proposition}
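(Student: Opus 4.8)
The plan is to expand the curvature $d\al+\al\wedge\al$ as a Laurent polynomial in $\la$ and read off its coefficients. These organize themselves into a Maurer--Cartan equation, which is responsible for the existence of $\phi$, together with one extra equation, which is the p.d.e.; the two implications then amount to passing between these two packagings.

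First I would write $\al=A\,dz_1+B\,dz_2$ with $A=\tfrac12(1-\tfrac1\la)\al_1$ and $B=\tfrac12(1-\la)\al_2$, and note that for a $\gc$-valued $1$-form on $\C^2$ the curvature is $d\al+\al\wedge\al=(\b_1B-\b_2A+[A,B])\,dz_1\wedge dz_2$. Substituting the explicit $\la$-dependence and using $(1-\tfrac1\la)(1-\la)=2-\la-\tfrac1\la$, the coefficient of $dz_1\wedge dz_2$ becomes a Laurent polynomial supported in $\la^{-1},\la^0,\la^1$. Since flatness is assumed for every nonzero $\la$, and a Laurent polynomial vanishing on all of $\C^\ast$ is identically zero, every coefficient must vanish. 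The coefficient of $\la^0$ is exactly the Maurer--Cartan equation $\b_1\al_2-\b_2\al_1+[\al_1,\al_2]=0$, while those of $\la^1$ and $\la^{-1}$ give $2\b_1\al_2+[\al_1,\al_2]=0$ and $2\b_2\al_1-[\al_1,\al_2]=0$. One checks that the $\la^0$ equation is half the difference of the latter two, so flatness for all $\la$ is equivalent to just these two relations.

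For the first implication, the Maurer--Cartan equation says that $d+\be$ is flat for $\be=\al_1\,dz_1+\al_2\,dz_2$, and Theorem \ref{flat}(4), applied in $\GC$ — where the simple connectivity of the domain is precisely the hypothesis that is needed — produces $\phi$ with $\be=\phi^{-1}d\phi$, i.e.\ $\al_i=\phi^{-1}\b_i\phi$. Adding the two surviving relations yields $\b_1\al_2+\b_2\al_1=0$, and substituting $\al_i=\phi^{-1}\b_i\phi$ is precisely $\b_1(\phi^{-1}\b_2\phi)+\b_2(\phi^{-1}\b_1\phi)=0$. The converse runs the identical bookkeeping in reverse: for any $\phi$ the forms $\al_i=\phi^{-1}\b_i\phi$ satisfy Maurer--Cartan automatically, and combining this with the hypothesis $\b_1\al_2+\b_2\al_1=0$ by taking sum and difference recovers both $\la^{\pm1}$ relations, so all three coefficients vanish and $d+\al$ is flat for every $\la$.

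For the reality statement I would observe that the whole argument is equivariant under the conjugation $c$. Setting $z_1=z$, $z_2=\bar z$ turns $\b_1,\b_2$ into $\b,\db$, so the p.d.e.\ becomes the harmonic map equation $\b(\phi^{-1}\db\phi)+\db(\phi^{-1}\b\phi)=0$, and the constraint $c(\al_1)=\al_2$ is the infinitesimal form of $C(\phi)=\phi$, forcing $\phi$ to take values in the real form $G$. The routine part is the Laurent bookkeeping; the point requiring the most care is verifying that the $\la$-twisting by $\tfrac12(1-\tfrac1\la)$ and $\tfrac12(1-\la)$ is compatible with $c$ (equivalently, with the standard reality condition on the loop $\la\mapsto d+\al$), so that both directions of the equivalence descend to the real form without losing the reality of $\phi$.
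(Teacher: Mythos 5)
Your proposal is correct and follows essentially the same route as the proof the paper appeals to (it defers to sections 4.3 and 7.3 of \cite{Gu08}): expand the curvature of $d+\al$ as a Laurent polynomial in $\la$ supported in $\la^{-1},\la^0,\la^1$, note that vanishing of the $\la^{\pm1}$ coefficients is equivalent to the Maurer--Cartan equation $\b_1\al_2-\b_2\al_1+[\al_1,\al_2]=0$ together with $\b_1\al_2+\b_2\al_1=0$, and integrate Maurer--Cartan on the simply connected domain to produce $\phi$ with $\al_i=\phi^{-1}\b_i\phi$. The one step you leave implicit is in the reality statement: that $c(\al_1)=\al_2$ actually yields $\phi$ with values in the real form $G$ is best seen by observing that $\phi$ and $C(\phi)$ then solve the same Maurer--Cartan system, so after normalizing $\phi(z_0)=e$ uniqueness gives $C(\phi)=\phi$.
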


The spectral parameter here plays a crucial role, because $\al$ may be regarded as a $1$-form taking values in the based loop algebra $\Om\g$, and the flatness condition implies (Theorem \ref{flat}) that $\al=\Phi^{-1}d\Phi$ for some $\Phi:N\to \Om G$ (on a simply connected open subset $N$ of $\C$).   This $\Phi$ is called an extended solution, or extended harmonic map.  Moreover, the shape of $\al$ implies that $\Phi$ is holomorphic with respect to the natural complex structure of the based loop group $\Om G$. Since this complex structure may be obtained from an identification of $\Om G$ with an open subset\footnote{See section 8.8 of \cite{Gu08}.
If $G$ is compact, $\Om G$ may be identified with $\La\GC/\La_+\GC$.}
of $\La\GC/\La_+\GC$. 
it follows that $\Phi=[L]$ for some holomorphic map $L:N\to \La\GC$, i.e.\ $L=\Phi B$ for some (smooth) $B:N\to\La_+\GC$.  This $L$ is of course not unique, but there is a canonical choice for it, obtained from the Birkhoff factorization
$\Phi=\Phi_-\Phi_+$ and taking $L=\Phi_-, B=\Phi_+^{-1}$. It can then be shown 
(section 7.3 of \cite{Gu08}) that $L$ satisfies an equation of the form
\[
L^{-1}dL =\tfrac1\la\om
\]
where $\om$ is a holomorphic $\gc$-valued $1$-form on $N$. Conversely, if $\om$ is {\em any} holomorphic $\gc$-valued $1$-form, then  $\tfrac1\la \om$ is of the form $L^{-1}dL$ (since $\om$ depends only on a single variable $z$, and is therefore flat).  From the Iwasawa factorization $L=L_\R L_+$, it is easy to show that the map given by $\Phi=L_\R$ is an extended harmonic map.  This correspondence 
\[
\phi \longleftrightarrow \om
\]
between harmonic maps $\phi$ and \ll unrestricted holomorphic data\rr $\om$ is known as the DPW correspondence, or generalized Weierstrass representation.  Further details can be found in section 7.3 of \cite{Gu08} or the original paper \cite{DoPeWu98}.

\no{\em The D-module.}

The harmonic map equation in the form $L^{-1}dL =\tfrac1\la\om$ can be rewritten in an illuminating way if we make use of the Grassmannian model of $\Om G$. This is an identification
\[
\Om G \ \cong\  \Gr^\g
\]
of $\Om G$ with a certain infinite-dimensional Grassmannian manifold (section 8.6 of \cite{PrSe86}).  The holomorphic map $\Phi=[L]:N\to \Om G$ corresponds to a 
holomorphic map $W:N\to\Gr^\g$, and hence to a holomorphic vector bundle 
$W^\ast\calT$ where $\calT$ is the tautologous vector bundle on $\Gr^\g$. The 
harmonic map equation $L^{-1}dL =\tfrac1\la\om$ can then be written as
\[
\la \b_z\ \Ga W^\ast\calT  \sub  \Ga W^\ast\calT 
\]
where $\Ga$ denotes the space of holomorphic sections.  This says that 
$\Ga W^\ast\calT$ has a D-module structure: it is acted upon by the ring 
$D^\la_z$ of differential operators which is generated by $\la \b_z$ and whose elements have coefficients which are holomorphic in $z\in N$ and holomorphic in $\la$ in a neighbourhood of $\la=0$.   In the case $G=\U_n$ this is explained in detail in section 8.2 of \cite{Gu08}.  

This D-module does not generally have a distinguished cyclic generator, although some examples with distinguished cyclic generators can be found in \cite{Gu02} (Proposition 2.3 and Theorem 2.4).  We shall focus on one particular kind of harmonic map which arises from quantum cohomology, where the D-module may be identified with the quantum D-module.

\no{\em Quantum cohomology as a (pluri)harmonic map.}

To explain the link with quantum cohomology, two extensions are needed (sections 7.4,7.5 of \cite{Gu08}).  First, the theory applies also to pluriharmonic maps $\phi:\C^r\to G$, whose equations have a similar zero curvature form.  However, when $r>1$, the holomorphic data
$\om= \sum_{i=1}^r\om_i dz_i$ is no longer \ll unrestricted\rrr; it is subject to the nontrivial  flatness condition $d\om=\om\wedge\om=0$.  Second,  the theory applies to harmonic (or pluriharmonic) maps $\phi:\C\to G/K$ where $G/K$ is a symmetric space.  Here the $1$-form $\al$ looks simpler as it can be written
\[
\al= (\al_1^\k + \tfrac{1}{\la}\al_1^\m) dz+
(\al_2^\k + {\la}\al_2^\m) d\bar z,
\]
where $\al_i=\al_i^\k + \al_i^\m$ denotes the eigenspace decomposition of the involution $\si:\gc\to\gc$ which defines the symmetric space.

The first main observation is that the map $L:N\to\glsc$ of quantum cohomology (where $N$ is an open subset of $H^2 M\cong \C^r$) is exactly of the above form, that is, it satisfies 
$L^{-1}dL =\tfrac1\la\om$ where $\om$ is given by the quantum products.   Moreover, $\om$ takes values in $\mc$, the $(-1)$-eigenspace of a certain natural involution $\si$ on $\gc=\frakglsc$ (this fact corresponds to the Frobenius property of the quantum product). 
By the above general theory, it follows that the quantum cohomology of $M$ defines a pluriharmonic map  into a symmetric space $G/K$, where $G$ is any real form of $\glsc$. 
One natural real form\footnote{In chapter 10 of \cite{Gu08}, an (indefinite) unitary group based on $H^\ast(M;\R)$
was used.  Any choice gives a pluriharmonic map, so the \ll best\rr choice depends on imposing further criteria.  
Further discussion of this point, in particular the relation with  \cite{IrXX}, can be found in section 6 of \cite{DGRXX}.}
 is $\glsr$, corresponding to the cohomology with real coefficients $H^\ast(M;\R)$, and this gives the symmetric space $\glsc/\O_{s+1}$.

The second main observation concerning quantum cohomology --- and the link with mirror symmetry --- is that in certain situations this (pluri)harmonic map has an independent geometrical interpretation, as the period map for a variation of Hodge structure.  The most famous example is the quintic threefold $M$ in $\C P^4$.  The harmonic map obtained from the quantum cohomology of $M$ can be described very simply as follows:  for a certain holomorphic $\C^4$-valued function $u$, consider the holomorphic map
\[
U\quad = \quad
\Span \{u\} \sub \Span \{u, u^\pr\} \sub 
\Span \{u,u^\pr, u^{\pr\pr}\}
\sub \C^4
\]
to the flag manifold
$\SU_4/\S(\U_1\times \U_1\times \U_1\times \U_1)$.  This flag manifold can be identified with the space of quadruples $(L_1,L_2,L_3,L_4)$ of mutually orthogonal complex lines in the Hermitian space $\C^4$, and it is well known (cf.\ Example 8.16 of \cite{Gu08}) that the composition of any map $U$ of the above form with the projection map $(L_1,L_2,L_3,L_4)\mapsto L_1\oplus L_3$ is a harmonic map into the symmetric space 
$\Gr_2\C^4=
\SU_4/\S(\U_2\times \U_2)$.  In terms of the general theory of section \ref{qde}, the map $u$ can be identified with the $J$-function, and the map $U$ with $L$,  if the flag manifold is embedded suitably in the loop group $\Om \SU_4$. (This is consistent with the above choice of symmetric space $\GL_{4}\R/\O_4$, because $U$ actually takes values in the smaller symmetric space $\Sp_4\R/\U_2$, a symplectic Grassmannian.)   Mirror symmetry says that $U$ can be identified with the period map
\[
H^{3,0}\sub   
H^{3,0}\!\oplus\! H^{2,1}\sub 
H^{3,0}\!\oplus\! H^{2,1}\!\oplus\! H^{1,2}\sub 
H^{3,0}\!\oplus\! H^{2,1}\!\oplus\! H^{1,2}\!\oplus\! H^{0,3}
\]
where $H^{i,j}=H^{i,j} \tilde M$ for a \ll mirror partner\rr $\tilde M$ of $M$.  The domain of this map is (an open subset of) the moduli space of complex structures of $\tilde M$, as $H^{i,j} \tilde M$ depends on the complex structure.  

It is a special feature of Calabi-Yau manifolds (such as the quintic threefold) that the harmonic map can be described as above in elementary terms, without using loop group theory. However, for the quantum cohomology of Fano manifolds (such as $\C P^n$), the map $L$ does not factor through a finite-dimensional submanifold of the loop group.  It does still have a variation of Hodge structure interpretation, in a generalized sense (due to Barannikov
\cite{Bar01}, \cite{Bar02a} and Katzarkov et al
\cite{KaKoPaXX}), because of the Grassmannian model of the loop group: instead of $U$, we use the holomorphic map $W$ (associated to $L$) which was described above.

The correspondence
\[
\phi \ \text{(variation of Hodge structure)}\ 
\longleftrightarrow 
\om \ \text{(quantum cohomology)}\ 
\]
between the pluriharmonic map $\phi$ and the holomorphic data $\om$
can be regarded as an expression of mirror symmetry. It is given explicitly by the Birkhoff and Iwasawa loop factorizations:
\[
\begin{matrix}
 & \xrightarrow{ \text{Birkhoff} } & \\
 \Phi\ (\text{or}\ \phi) \ \ \ & & \ \ \ L\ (\text{or}\ \om) \\ 
  & \xleftarrow{ \text{Iwasawa} } &
\end{matrix}
\]
In this way, the quantum D-module contains not only the geometric information consisting of the Gromov-Witten invariants, but also the much less visible geometric information consisting of the variation of Hodge structure of the mirror partner.

\section{Conclusion}\label{conclusion}

Much remains to be done to clarify the integrable systems aspects of quantum cohomology, but an even more elusive goal is to {\em characterize} quantum cohomology in purely differential equation theoretic terms.  The quantum D-module will attain the status of de Rham cohomology (for example) only if those D-modules which occur as quantum D-modules can be described precisely.
This goal is probably too optimistic, but one can at least make a start by listing some conditions, such as:

---quantization of a commutative algebra

---regular singular point of maximal unipotent monodromy at $q=0$

---homogeneity

---self-adjointness

\no In section \ref{qde}
we have focused on the first of these, so let us comment briefly on the others, taking the case of $M^3_5$ 
(Examples \ref{hyper}, \ref{hyper2}, \ref{hyper3})
as a concrete example.  The quantum differential operator here is
\[
(\h\b)^4-27q(\h\b)^2-27\h q(\h\b)-6\h^2q.
\]
The second condition has the usual meaning from o.d.e.\ theory. The third means that the operator is weighted homogeneous, the weights of the symbols $\b, \h, q$ being
$0, 2, 4$ respectively.  So far,  any quantization of the quantum cohomology relation $b^4-27qb^2$ of the form 
$(\h\b)^4-27q(\h\b)^2-\al\h q(\h\b)-\be\h^2q$
would have all these properties, where $\al,\be$ are constants. The fourth condition means that the operator is formally self-adjoint with respect to the involution defined by $\b^\ast=-\b$, $\h^\ast=-\h$ (see section 6.3 of \cite{Gu08}).  This condition forces $\al$ to be $27$.  However, only the value $\be=6$ gives the correct quantum products (or Gromov-Witten invariants), and our conditions do not pin this down; we need more, and these are not going to be straightforward. 

One source of additional conditions is the global behaviour of the associated (pluri)harmonic map, regarded as a generalized period map.  There is a positivity condition which generalizes the second Riemann-Hodge bilinear relation, and it is natural to insist on this.  Some ideas in this direction can be found in \cite{IrXX} (see also \cite{DGRXX} for a particular example). A related source is the arithmetic behaviour of the differential equation. Even in the Calabi-Yau case where the second Riemann-Hodge bilinear relation is known to hold, it is very difficult to characterize differential equations whose solutions have the expected integrality properties (related to the integrality or rationality of the Gromov-Witten invariants) --- see, for example,
\cite{AmvEvSZuXX} and \cite{AmZuXX}.

{\em
\no
Department of Mathematics and Information Sciences\newline
   Faculty of Science and Engineering\newline
   Tokyo Metropolitan University\newline
   Minami-Ohsawa 1-1, Hachioji, Tokyo 192-0397\newline
   JAPAN}

\end{document}